\documentclass[11pt,reqno]{amsart}
\usepackage{amsmath,amsfonts,amssymb,amsthm,amscd,comment,euscript}
\usepackage[all]{xy}
\usepackage{graphicx}
\usepackage{mathptmx}
\usepackage{enumerate} %allows to change enumeration items easily
\usepackage[colorlinks=true]{hyperref} %to generate pdf with links
\usepackage[usenames,dvipsnames]{xcolor}
\usepackage{enumitem}

\xymatrixcolsep{1.9pc}                          % Adjust size of diagrams.
\xymatrixrowsep{1.9pc}
\newdir{ >}{{}*!/-5\pt/\dir{>}}                  % Make better tailed arrows

\addtolength{\textwidth}{2.5cm} \calclayout

\swapnumbers

\theoremstyle{plain}
\newtheorem{lem}{Lemma}[section]
\newtheorem{cor}[lem]{Corollary}

\newtheorem{thm}[lem]{Theorem}

\theoremstyle{definition}
\newtheorem{ex}[lem]{Example}

\newtheorem{rem}[lem]{Remark}
\newtheorem{dfn}[lem]{Definition}

\newtheorem{axioms}[lem]{Axioms}
\newtheorem{evaluationfunctors}[lem]{Evaluation Functors}

\renewcommand{\phi}{\varphi}
\renewcommand{\leq}{\leqslant}
\renewcommand{\geq}{\geqslant}
\renewcommand{\epsilon}{\varepsilon}

\renewcommand{\kappa}{\varkappa}

\DeclareMathOperator{\spec}{Spec}
 \DeclareMathOperator{\cyl}{cyl}
\DeclareMathOperator{\proj}{proj}

 \DeclareMathOperator{\mot}{mot}

\DeclareMathOperator{\Hom}{Hom} 
 \DeclareMathOperator{\id}{id}

 \DeclareMathOperator{\colim}{colim}

\DeclareMathOperator{\pr}{pr}
 \DeclareMathOperator{\nis}{\mathsf{nis}}

 \DeclareMathOperator{\Ob}{Ob}

\newcommand{\one}{\bf{1}}
\newcommand{\MGL}{\bf{MGL}}
\newcommand{\MSp}{\bf{MSp}}
\newcommand{\MSL}{\bf{MSL}}

\newcommand{\MZ}{\bf{M}\mathbb{Z}}
\newcommand{\MWMZ}{\widetilde{\bf{M}}\mathbb{Z}}

 %notes by Grisha
 %notes by Paul
 %notes by Vanya

\newcommand{\lra}[1]{\bl{#1}\longrightarrow\relax}
\newcommand{\bl}[1]{\buildrel #1\over}
\newcommand{\cc}{\mathcal}
\newcommand{\bb}{\mathbb}

\newcommand{\op}{{\textrm{\rm op}}}

\newcommand{\wh}{\widehat}
\newcommand{\wt}{\widetilde}

\newcommand{\gmp}{\bb G}
\newcommand{\gmpn}{\bb G^{\wedge n}}
\newcommand{\uhom}{\underline{\Hom}}

\newcommand{\M}{\mathcal{M}}

\newcommand{\SH}{\mathbf{SH}}
\newcommand{\Sp}{\mathbf{Sp}}
\newcommand{\eff}{\mathsf{eff}}
\newcommand{\veff}{\mathsf{veff}}
\newcommand{\fr}{\mathsf{fr}}
\newcommand{\Ho}{\mathbf{H}}
\newcommand{\veffr}{\mathsf{veffr}}
\newcommand{\pt}{\mathsf{pt}}
\newcommand{\ev}{\mathsf{ev}}

\begin{document}

\footskip30pt

\baselineskip=1.1\baselineskip

\title{Framed motivic $\Gamma$-spaces}
\author{Grigory Garkusha}
\address{Department of Mathematics, Swansea University, Fabian Way, Swansea SA1 8EN, UK}
\email{g.garkusha@swansea.ac.uk}

\author{Ivan Panin}
\address{St. Petersburg Branch of V. A. Steklov Mathematical Institute,
Fontanka 27, 191023 St. Petersburg, Russia \& Department of Mathematics, University of Oslo, P.O.~Box 1053 Blindern, 0316 Oslo, Norway}
\email{paniniv@gmail.com}

\author{Paul Arne {\O}stv{\ae}r}
\address{Department of Mathematics, University of Oslo, P.O.~Box 1053 Blindern, 0316 Oslo, Norway}
\email{paularne@math.uio.no}

\begin{abstract}
We combine several mini miracles to achieve an elementary understanding of infinite loop spaces
and very effective spectra in the algebro-geometric setting of motivic homotopy theory.
Our approach combines $\Gamma$-spaces and Voevodsky's framed
correspondences into the concept of framed motivic $\Gamma$-spaces;
these are continuous or enriched functors of two variables that take values in 
framed motivic spaces.
We craft proofs of our main results by imposing further axioms on framed motivic $\Gamma$-spaces such as
a Segal condition for simplicial Nisnevich sheaves,
cancellation,
${\bb A}^{1}$- and $\sigma$-invariance,
Nisnevich excision,
Suslin contractibility,
and grouplikeness.
This adds to the discussion in the literature on coexisting points of view on the ${\bb A}^{1}$-homotopy theory of algebraic varieties.
\end{abstract}

\dedicatory{In memory of Vladimir Voevodsky}

\keywords{Framed correspondences, $\Gamma$-spaces, motivic spaces, framed motivic $\Gamma$-spaces, connective and very effective motivic spectra, infinite motivic loop spaces}
\subjclass[2010]{14F42, 55P42}

\maketitle
\thispagestyle{empty}
\pagestyle{plain}

\section{Introduction}
\label{section:introduction}
The category $\Gamma$ of correspondences or multivalued functions on finite sets is of fundamental importance in topology \cite{S}.
Following Boardman and Vogt \cite{BV},
Segal's work on $\Gamma$-spaces give convenient models for $E_{\infty}$ spaces
--- spaces with multiplications that are unital, associative, and commutative up to higher coherent homotopies ---
and for infinite loop spaces.
Segal applied his ideas to prove the celebrated Barratt-Priddy-Quillen theorem identifying the group completion of the disjoint union $\sqcup_{n} B\Sigma_{n}$ of
classifying spaces of symmetric groups with the infinite loop space of  $\bb S$ ---
the topological sphere.
Soon afterwards,
Bousfield and Friedlander carried out their homotopical identification of connective spectra and $\Gamma$-spaces
--- an early striking success in the development of stable homotopy theory \cite{BFgamma}.
Moreover,
$\Gamma$-spaces have the advantage that they are simple to define,
as well as being intrinsically tied to $K$-theory and topological Hochschild homology \cite{DGM}.
\vspace{0.1in}

In this paper we introduce the concept of framed motivic $\Gamma$-spaces together with a few axioms.
The main purpose of our set-up is to advance our practical understanding of infinite loop spaces along with new
viewpoints on connective and very effective spectra
in the algebro-geometric setting of ${\bb A}^{1}$-homotopy theory \cite{MV}, \cite{VoeICM}.
Voevodsky envisioned this new direction of development in his work on framed correspondences in motivic homotopy theory \cite{Voe2}.
\vspace{0.1in}

Working over a field $k$,
our approach combines Segal's category $\Gamma$ with Voevodsky's symmetric monoidal category $\mathrm{Sm}/k_{+}$ of framed correspondences of level zero \cite{Voe2}
--- a slight enlargement of $\mathrm{Sm}/k$, the category of smooth separated schemes of finite type over $\spec(k)$.
\vspace{0.1in}

Recall from \cite{GP1} that a framed motivic space is a pointed simplicial  Nisnevich
sheaf on the category of framed correspondences $\mathrm{Fr}_{+}(k)$. As noted in \S\ref{section:preliminaries},
$\mathrm{Sm}/k_{+}$, the opposite category $\Gamma^{\op}$ of pointed finite sets and the category of framed motivic
spaces $\cc M^{\fr}$ are enriched in the closed symmetric
monoidal category of pointed motivic spaces $\M$ \cite{DLORV}.
With respect to the said enrichments,
we shall consider ``continuous functors in two variables" for the monoidal product of $\Gamma^{\op}$ and $\mathrm{Sm}/k_{+}$
taking values in framed motivic spaces
\begin{equation}
\label{equation:mgs}
\cc X
\colon
\Gamma^{\op}
\boxtimes
\mathrm{Sm}/k_{+}
\longrightarrow
\M^{\fr}
\end{equation}
and call them {\it framed motivic $\Gamma$-spaces\/}
(see Defintion~\ref{dfn:gammasp}).
%In this context it is important to note that $\M^{\fr}$ is not a closed symmetric monoidal category,
%and therein lies the rub.
%To make sense of our definition
We should note that there is a canonically induced faithful functor
\begin{equation}
\label{equation:MfrtoM}
\M^{\fr}
\longrightarrow
\M
\end{equation}
obtained from the composite
\begin{equation}
\label{equation:SmtoFr}
\mathrm{Sm}/k
\longrightarrow
\mathrm{Sm}/k_{+}
\longrightarrow
\mathrm{Fr}_{+}(k).
\end{equation}
The definition of framed correspondences
%%%$\mathrm{Fr}_{+}(k)$
invented in \cite{Voe2} uses an
algebro-geometric analogue of a framing on the stable normal bundle of a manifold.
We shall review additional background on \eqref{equation:mgs}, \eqref{equation:MfrtoM},
and \eqref{equation:SmtoFr} in \S\ref{section:preliminaries}.
\vspace{0.1in}

In our quest to carry over Segal's programme for $\Gamma$-spaces to ${\bb A}^{1}$-homotopy
theory we begin by formulating some homotopical axioms for framed motivic $\Gamma$-spaces.
These axioms concern both of the ``variables" $\Gamma^{\op}$ and $\mathrm{Sm}/k_{+}$ in \eqref{equation:mgs}.
Informally speaking,
the pointed finite sets accounts for the $S^{1}$-suspension whereas the framed correspondences
accounts for the $\bb G_{m}$-suspension in stable motivic homotopy theory.
We may and will view $\Gamma^{\op}$ as the full subcategory of pointed finite sets with
objects $n_{+}=\{0,\dots,n\}$ pointed at $0$ for every integer $n\geq 0$.
\vspace{0.1in}

Every $\Gamma$-space gives rise to a simplicial functor and hence an associated $S^{1}$-spectrum; for details, see \cite[Chapter 2]{DGM}.
Similarly in the motivic setting,
see \eqref{equation:S1evaluation},
we show that every $U\in \mathrm{Sm}/k_{+}$ and $\cc X$ as in \eqref{equation:mgs} give rise to a presheaf of $S^{1}$-spectra $\cc X(\bb S,U)$.
We refer to \cite{JardineLocal} for a comprehensive introduction to the homotopical algebra of such presheaves.
In Axioms \ref{axioms:motivicgammaspaces} below we employ the notions of local equivalences for simplicial presheaves \cite[Chapter 4]{JardineLocal} and stable local equivalences
for presheaves of $S^{1}$-spectra \cite[Chapter 10]{JardineLocal}.
\vspace{0.1in}

For $n\geq 0$ and every finitely generated field extension $K/k$ we write $\wh{\Delta}^{n}_{K/k}$ for the semilocalization of the standard algebraic $n$-simplex
$$
\Delta^n_K
=
\spec(K[x_0,\ldots,x_n]/(x_0+\cdots+x_n-1))
$$
with closed points the vertices $v_{0},\dots,v_{n}\in \Delta^n_K$ --- see \cite[\S3]{VoeICM} for the colimit preserving realization functor from simplicial sets to Nisnevich sheaves.
We recall that $v_{i}$ is the closed subscheme of $\Delta^n_K$ defined by $x_{j}=0$ for $j\neq i$, $0\leq i\leq n$.
Following \cite[\S2]{Suslin} we write $\wh{\Delta}^{\bullet}_{K/k}$ for the corresponding cosimplicial semilocal scheme.
\vspace{0.1in}

We are ready to introduce the main objects of study in this paper.
\begin{axioms}
\label{axioms:motivicgammaspaces}
A framed motivic $\Gamma$-space $\cc X$  is called {\it special} if (1)-(5) holds:
\begin{enumerate}
\item
We have $\cc X(0_{+},U)=\ast=\cc X(n_{+},\emptyset)$ for all $n\geq 0$ and $U\in \mathrm{Sm}/k_{+}$,
while for all $n\geq 1$ and nonempty $U\in \mathrm{Sm}/k_{+}$ the naturally induced morphism
$$
\cc X(n_{+},U)
\longrightarrow
\cc X(1_{+},U)
\times
\bl n
\cdots
\times
\cc X(1_{+},U)
$$
is a local equivalence of pointed motivic spaces.
\vspace{0.05in}

\item
For all $n\geq 0$ and $U\in \mathrm{Sm}/k_{+}$ the framed presheaf of stable homotopy groups
$$
V
\longmapsto
\pi^{s}_{n}\cc X(\bb S,U)(V)
$$
is ${\bb A}^{1}$-invariant, radditive and $\sigma$-stable (see Remark~\ref{somedefs}).
\vspace{0.05in}

\item (Cancellation)
Let $\gmp$ denote the cone of the $1$-section $\mathrm{Spec}(k)\longrightarrow\bb G_{m} $ in $\Delta^{\op}\mathrm{Sm}/k_{+}$.
For all $n\geq 0$ and $U\in \mathrm{Sm}/k_{+}$ there is a canonical stable local equivalence
$$
\cc X(\bb S,\gmpn\times U)
\longrightarrow
\underline{\Hom}(\gmp,\cc X(\bb S,\bb G^{\wedge n+1}\times U)).
$$

\item (${\bb A}^{1}$-invariance)
For all $U\in \mathrm{Sm}/k_{+}$ there is a naturally induced stable local equivalence
$$
\cc X(\bb S,U\times\bb A^1)\longrightarrow\cc X(\bb S,U).
$$

\item (Nisnevich excision)
For every elementary Nisnevich square in $\mathrm{Sm}/k$
$$
\xymatrix{
U'\ar[r]\ar[d]&X'\ar[d]\\
U\ar[r]&X }
$$
there is a homotopy cartesian square in the stable local model structure:
$$
\xymatrix{
\cc X(\bb S,U')\ar[r]\ar[d]&\cc X(\bb S,X')\ar[d]\\
\cc X(\bb S,U)\ar[r]&\cc X(\bb S,X)
}
$$

Moreover, a special framed motivic $\Gamma$-space $\cc X$ is called {\it very effective\/} if (6) holds and {\it very special\/} if (7) holds.
\vspace{0.05in}

\item(Suslin contractibility)
For all $U\in \mathrm{Sm}/k_{+}$ and any finitely generated field extension $K/k$,
the geometric realization of the simplicial $S^1$-spectrum
$$
\cc X(\bb S,\bb G\times U)(\wh{\Delta}^{\bullet}_{K/k})
$$ is contractible.
\vspace{0.05in}

\item(Grouplikeness)
For all $U\in \mathrm{Sm}/k_{+}$ the Nisnevich sheaf $\pi^{\nis}_{0}\cc X(1_{+},U)$ associated to the presheaf
$$
V
\longmapsto
\pi_{0}\cc X(1_{+},U)(V)
$$
of connected components on $\mathrm{Sm}/k$ takes values in abelian groups.
\end{enumerate}
\end{axioms}

\begin{rem}\label{somedefs}
The reader will recognize axioms (1) and (7) as sheaf versions of special and very special Segal $\Gamma$-spaces,
respectively \cite{BFgamma,S}.
Axiom (2) makes use of the assumption that $\cc X$ is a framed motivic $\Gamma$-space.
A framed presheaf $\cc F$ is {\it $\sigma$-stable\/} if $\cc F(\sigma_{V})=\id_{\cc F(V)}$ for all $V\in\mathrm{Sm}/k$.
Here the level $1$ explicit framed correspondence
$(\{0\}\times V,{\bb A}^{1}\times V,\pr_{\bb A^{1}},\pr_{V})\in\mathrm{Fr}_{1}(V,V)$ defines a map
$\sigma_{V}\colon V\longrightarrow V$ in $\mathrm{Fr}_{+}(k)$;
see \cite[\S2]{GP1}. $\cc F$ is {\it radditive\/} if $\cc F(\emptyset)=*$ and $\cc F(X_1\sqcup X_2)=\cc F(X_1)\times\cc F(X_2)$
for all $X_1,X_2\in\mathrm{Sm}/k$. In (3),
$\gmp$ is a simplicial object in $\mathrm{Sm}/k_{+}$ with smash product $\bb G^{\wedge n}$
formed in $\Delta^{\op}\mathrm{Sm}/k_{+}$ \cite[Notation 8.1]{GP1}.
Axioms (2), (3), (4), and (5) are concerned with presheaves of $S^{1}$-spectra as in \cite[Part IV]{JardineLocal}.
Axiom (6) traces back to Suslin's work on rationally contractible presheaves in \cite{Suslin};
see also \cite{BF} and \cite{GP5}.
\end{rem}

\begin{ex}
\label{ex:iutrhgvkubk}
An example of a quintessential special framed motivic $\Gamma$-space is given by
$$
(n_{+},U)
\in
\Gamma^{\op}\boxtimes \mathrm{Sm}/k_{+}
\longmapsto
C_{*}\mathrm{Fr}(-,n_{+}\otimes U)
\in
\M^{\fr}.
$$
Here $\mathrm{Fr}$ refers to stable framed correspondences and $C_{*}\mathrm{Fr}(-,X')$ to the simplicial framed functor
$X\longmapsto \mathrm{Fr}(X\times {\Delta}^{\bullet}_{k},X')$
---
see \cite{GP1} and \cite{Voe2}.
By $K\otimes U$,
where $K\in\Gamma^{\op}$ and $U\in\mathrm{Sm}/k$,
we mean the coproduct of copies of $U$ indexed by the non-based elements in $K$.
%These constructions are reviewed in \S\ref{section:preliminaries}.
\end{ex}

The evaluation functor in \eqref{equation:S1Gevaluationfr} associates to every framed
motivic $\Gamma$-space $\cc X$ an object in the category of framed motivic spectra
in the sense of~\cite[Definition 2.1]{GP5}
$$
\cc X_{S^{1},\bb G}
\in
\mathbf{Sp}^{\fr}_{S^1,\bb G}(k).
$$
%in the sense of~\cite[Definition 2.1]{GP5}.

Recall that the triangulated category of framed bispectra $\SH^{\fr}_{\nis}(k)$ whose objects are those of
$\mathbf{Sp}^{\fr}_{S^1,\bb G}(k)$ is equivalent to the stable motivic homotopy category
$\SH(k)$ via the identity with quasi-inverse the big framed motive functor \cite[Theorem 2.2]{GP5}.
The big framed motive functor is closely related to Example \ref{ex:iutrhgvkubk}
--- for details we refer to \cite[Section 12]{GP1}.
\vspace{0.1in}

For the purposes of this paper it is not necessary to discuss model structures on framed motivic $\Gamma$-spaces.
Our next definition is inspired by Segal's homotopy category of $\Gamma$-spaces \cite{S}.

\begin{dfn}
\label{fmgshc}
The {\it homotopy category of framed motivic $\Gamma$-spaces\/}
$$
\Ho_{\Gamma\M}^{\fr}(k)
$$
has objects special framed motivic $\Gamma$-spaces and with morphisms given by
$$
\Ho_{\Gamma\M}^{\fr}(k)(\cc X,\cc Y):=\SH^{\fr}_{\nis}(k)(\cc X_{S^{1},\bb G},\cc Y_{S^{1},\bb G}).
$$
\end{dfn}
\vspace{0.1in}

In \S\ref{fmg} we discuss how $\Ho_{\Gamma\M}^{\fr}(k)$ relates to the unstable
pointed motivic homotopy category $\Ho(k)$ and to connective motivic spectra $\SH(k)_{\geq 0}$
via the commutative
--- up to equivalence of functors ---
diagram of adjunctions:
\begin{equation}
\label{equation:jeytrfgvjc}
\xymatrix{
\Ho(k)
\ar@/^/[rr]^-{\Sigma^{\infty}_{S^{1},\bb G}} \ar@/_/[dr]_-{C_{\ast}\mathrm{Fr}}
&&
\SH(k)_{\geq 0} \ar@/^/[dl]^-{\Gamma\bb M_{\fr}} \ar@/^/[ll]^-{\Omega^{\infty}_{S^{1},\bb G}}
\\
& \Ho_{\Gamma\M}^{\fr}(k) \ar@/^/[ur] \ar@/_/[ul]
}
\end{equation}
Here $\cc X\in \Ho_{\Gamma\M}^{\fr}(k)$ is mapped to its {\it underlying motivic space}
$\cc X (1_{+},\pt)\in \Ho(k)$ and to its {\it framed motivic spectrum}
$\cc X_{S^{1},\bb G}\in\SH(k)_{\geq 0}$ under the equivalence between $\SH^{\fr}_{\nis}(k)$ and $\SH(k)$ in \cite{GP5}.
We refer to Remark \ref{sergachev} for the definition of $\Gamma\bb M_{\fr}$
--- a version of the big framed motive functor introduced in \cite[Section 12]{GP1}.

\begin{thm}
\label{theorem:firstmain}
For every infinite perfect field $k$ there is an equivalence of categories
\begin{equation}
\label{equation:firstmainequivalence}
\Ho_{\Gamma\M}^{\fr}(k)
\overset{\simeq}{\longrightarrow}
\SH(k)_{\geq 0}
\end{equation}
$$
\cc X
\longmapsto
\cc X_{S^{1},\bb G}.
$$
Its quasi-inverse $\SH(k)_{\geq 0}\overset{\simeq}{\longrightarrow}\Ho_{\Gamma\M}^{\fr}(k)$
takes $\cc E\in\SH(k)_{\geq 0}$ to an explicitly constructed framed motivic $\Gamma$-space
$\Gamma\bb M^{\cc E}_{\fr}\in\Ho_{\Gamma\M}^{\fr}(k)$.
\end{thm}

Let $\SH^{\veff}(k)$ be the full subcategory of $\SH(k)$ that is generated under
homotopy colimits and extensions by motivic ${\bb P}^{1}$-suspension spectra of smooth schemes.
This category is of interest since it gives rise to the very effective slice filtration introduced in \cite{SO}.
We note $\SH^{\veff}(k)$ is contained in the triangulated category $\SH(k)_{\geq 0}$
--- generated under homotopy colimits and extensions by motivic ${\bb P}^{1}$-suspension spectra $\Sigma^{p,q}U_{+}$,
where $p\geq q$, and $U\in\mathrm{Sm}/k$.
\vspace{0.1in}

We shall study $\SH^{\veff}(k)$ from the point of view of framed motivic $\Gamma$-spaces.

\begin{dfn}
The {\it homotopy category of very effective framed motivic $\Gamma$-spaces\/}
$$
\Ho_{\Gamma\M}^{\veffr}(k)
$$
is the full subcategory of $\Ho_{\Gamma\M}^{\fr}(k)$ comprised of very effective special framed motivic $\Gamma$-spaces.
\end{dfn}

We show that Axiom (6) on Suslin contractibility of special framed motivic $\Gamma$-spaces captures
precisely the difference between $\SH^{\veff}(k)$ and $\SH(k)_{\geq 0}$.

\begin{thm}
\label{theorem:secondmain}
For every infinite perfect field $k$ there is an equivalence of categories:
\begin{equation}
\label{equation:secondmainequivalence}
\Ho_{\Gamma\M}^{\veffr}(k)
\overset{\simeq}{\longrightarrow}
\SH^{\veff}(k)
\end{equation}
$$
\cc X
\longmapsto
\cc X_{S^{1},\bb G}
$$
\end{thm}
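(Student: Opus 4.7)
The plan is to deduce Theorem \ref{theorem:secondmain} from Theorem \ref{theorem:firstmain} by identifying, under the equivalence \eqref{equation:firstmainequivalence}, the full subcategory $\Ho_{\Gamma\M}^{\veffr}(k)\subset\Ho_{\Gamma\M}^{\fr}(k)$ with $\SH^{\veff}(k)\subset \SH(k)_{\geq 0}$. Since both sides in question are full subcategories, fully faithfulness of the restricted functor $\cc X\mapsto\cc X_{S^{1},\bb G}$ is automatic from Theorem \ref{theorem:firstmain}; only the matching of essential images requires argument.

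First I would establish the following characterization of very effectiveness: a connective motivic spectrum $E\in\SH(k)_{\geq 0}$ lies in $\SH^{\veff}(k)$ iff $\Omega_{\bb G_m}E$ is again connective, and equivalently iff for every finitely generated field extension $K/k$ the geometrically realized Suslin complex of $\Omega_{\bb G_m}E$ at the semilocal simplex $\wh\Delta^{\bullet}_{K/k}$ is stably contractible. The forward implication is essentially the definition of the very effective subcategory together with $\bb A^{1}$-invariance; the converse rests on Morel--Voevodsky style rigidity, the framed homotopy $t$-structure, and strict $\bb A^{1}$-invariance of framed presheaves over an infinite perfect field, which together detect vanishing of the relevant 0-th Nisnevich homotopy sheaves at field points.

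Next I would invoke the cancellation axiom (3) combined with the equivalence $\SH^{\fr}(k)\simeq\SH(k)$ of \cite{GP5} to identify $\Omega_{\bb G_m}\cc X_{S^{1},\bb G}$ with the framed motivic spectrum of $\cc X(-,\bb G\times -)$. Under this identification, axiom (6) translates precisely into the Suslin contractibility characterization of the previous paragraph applied to $\cc X_{S^{1},\bb G}$; consequently $\cc X\in\Ho_{\Gamma\M}^{\veffr}(k)$ forces $\cc X_{S^{1},\bb G}\in\SH^{\veff}(k)$. Essential surjectivity is then immediate: for $E\in\SH^{\veff}(k)\subset\SH(k)_{\geq 0}$, Theorem \ref{theorem:firstmain} supplies a special $\cc X$ with $\cc X_{S^{1},\bb G}\simeq E$, and the characterization returns axiom (6) for $\cc X$, placing it in $\Ho_{\Gamma\M}^{\veffr}(k)$.

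The main obstacle is the characterization of very effective connective spectra in terms of Suslin contractibility at semilocal simplices of field points, and in particular the transport, via cancellation, of this condition all the way into the value $\cc X(-,\bb G\times U)$ of the $\Gamma$-space. This is the step where axioms (1)--(5) combine decisively with axiom (6), and where the hypothesis that $k$ be infinite and perfect is essential in order to bring the framed rigidity machinery of \cite{GP1,GP5} to bear. Once this translation is nailed down, the two-sided inclusion of essential images formally yields the equivalence \eqref{equation:secondmainequivalence}.
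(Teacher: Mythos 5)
Your high-level plan coincides with the paper's: view $\SH^{\veff}(k)=\SH(k)_{\geq 0}\cap\SH^{\eff}(k)$, restrict the equivalence of Theorem \ref{theorem:firstmain}, and reduce to showing that Axiom~(6) matches the effectivity condition. However, the concrete way you propose to run this contains a genuine error and a couple of gaps.

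The proposed characterization ``$E\in\SH(k)_{\geq 0}$ is very effective iff $\Omega_{\bb G_m}E$ is connective'' is false. With the sphere conventions of the paper ($\bb G_m=S^{1,1}$), one has $\pi^{\bb A^1}_{p,n}(\Omega_{\bb G_m}E)\cong\pi^{\bb A^1}_{p+1,n+1}E$, so connectivity of $\Omega_{\bb G_m}E$ is literally equivalent to connectivity of $E$. Hence your condition (ii) never cuts down $\SH(k)_{\geq 0}$, much less to the strictly smaller subcategory $\SH^{\veff}(k)$. The entire derivation that hangs on this equivalence is therefore unsound. The Suslin contractibility criterion you mention second is the one that actually does the work; this is what the paper invokes via \cite[Theorem~4.4]{BF} and \cite[Definition~3.5, Theorem~3.6]{GP5}.

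Two further gaps. First, the identification via Axiom~(3) of ``$\Omega_{\bb G_m}\cc X_{S^{1},\bb G}$ with the framed motivic spectrum of $\cc X(-,\bb G\times -)$'' is not right as stated: cancellation relates $\cc X(\bb S,\gmpn\times U)$ to $\uhom(\gmp,\cc X(\bb S,\gmpnl\times U))$, i.e.\ the $\bb G$-loop sits on the \emph{higher}-twist term, and passing between the smash powers $\gmpn$ appearing in $\cc X_{S^{1},\bb G}$ and the products $\bb G\times U$ appearing in Axiom~(6) requires using the Segal condition (Axiom~(1)). What the paper actually extracts from Axiom~(6) is stable contractibility of $\vert\cc X(\bb S,\gmpn)(\wh\Delta^{\bullet}_{K/k})\vert$ for all $n>0$, which is the hypothesis of \cite[Theorem~3.6]{GP5}. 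Second, for essential surjectivity you only discuss the case $U=\pt$, but Axiom~(6) must hold for \emph{all} $U\in\mathrm{Sm}/k_{+}$. The paper handles this by noting $\cc E\wedge U_{+}\in\SH^{\veff}(k)$ whenever $\cc E\in\SH^{\veff}(k)$ and $U\in\mathrm{Sm}/k$, using closure of $\SH^{\veff}(k)$ under smash products \cite[Lemma~5.6]{SO}, together with the identification $\cc E\wedge U_{+}\cong\ev_{S^{1},\gmp}(\Gamma\bb M^{\cc E}_{\fr}(-\times U))$ carried over from the proof of Theorem~\ref{theorem:firstmain}. Without this step you have not verified Axiom~(6) for your candidate $\Gamma$-space.
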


Finally,
we employ Axiom (7) in our recognition principle for infinite motivic loop spaces.
\begin{thm}
\label{theorem:thirdmain}
For every infinite perfect field $k$ and every $\cc E\in \SH(k)$ there exists a
very special framed motivic $\Gamma$-space $\Gamma\bb M_{\fr}^{\cc E}$
and a local equivalence of pointed motivic spaces:
\begin{equation}
\label{equation:thirdmainequivalence}
\Gamma\bb M_{\fr}^{\cc E}(1_{+},\pt)\simeq
\Omega^{\infty}_{S^{1}}\Omega^{\infty}_{\bb G}\cc E
\end{equation}
Moreover,
if $\cc X$ is a very special framed motivic $\Gamma$-space then $\cc X(1_{+},\pt)$ is an infinite motivic loop space.
\end{thm}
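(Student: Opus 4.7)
The plan is to leverage Theorem~\ref{theorem:firstmain} together with the diagram of adjunctions~\eqref{equation:jeytrfgvjc}, isolating the grouplikeness Axiom~(7) as the bridge between a naturally occurring motivic group completion map and an actual equivalence of pointed motivic spaces.

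For the existence statement, given $\cc E\in\SH(k)$, I would pass to its $0$-connective cover $\tau_{\geq 0}\cc E\in\SH(k)_{\geq 0}$ under the homotopy $t$-structure and set
\[
\Gamma\bb M_{\fr}^{\cc E} := \Gamma\bb M_{\fr}(\tau_{\geq 0}\cc E)\in\Ho_{\Gamma\M}^{\fr}(k).
\]
By construction of $\Gamma\bb M_{\fr}$, which performs a Segal-type assembly on the levels of a spectrum, the underlying motivic space is
\[
\Gamma\bb M_{\fr}^{\cc E}(1_+,\pt)\simeq\Omega^{\infty}_{S^{1}}\Omega^{\infty}_{\bb G}\tau_{\geq 0}\cc E,
\]
and the canonical map from this to $\Omega^{\infty}_{S^{1}}\Omega^{\infty}_{\bb G}\cc E$ is a local equivalence because $\Omega^{\infty}_{S^{1}}\Omega^{\infty}_{\bb G}$ annihilates the negative part of the homotopy $t$-structure. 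Axioms~(1)--(5) hold by design of the big framed motive functor (cf.\ Theorem~\ref{theorem:firstmain}), while Axiom~(7) reduces to the fact that $\pi^{\nis}_{0}$ of a motivic infinite loop space takes values in abelian groups.

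For the recognition statement, let $\cc X$ be a very special framed motivic $\Gamma$-space. By Theorem~\ref{theorem:firstmain}, $\cc X$ corresponds to the connective framed motivic spectrum $\cc X_{S^{1},\bb G}\in\SH(k)_{\geq 0}$. There is a canonical unit map
\[
\cc X(1_+,\pt)\longrightarrow\Omega^{\infty}_{S^{1}}\Omega^{\infty}_{\bb G}\cc X_{S^{1},\bb G}
\]
arising from the adjunctions in~\eqref{equation:jeytrfgvjc}, which plays the role, in this motivic setting, of the classical group completion map for $\Gamma$-spaces. Specialness Axioms~(1)--(5) ensure that the induced maps on $\pi^{\nis}_{n}$ for $n\geq 1$ are isomorphisms of Nisnevich sheaves, following the Bousfield--Friedlander pattern. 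Axiom~(7) supplies the remaining input at the $\pi_{0}$-level: grouplikeness forces the motivic group completion to be the identity on the Nisnevich sheaf of components. The displayed map is therefore a local equivalence, and $\cc X(1_+,\pt)$ is identified with the infinite motivic loop space $\Omega^{\infty}_{S^{1}}\Omega^{\infty}_{\bb G}\cc X_{S^{1},\bb G}$.

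The hard part will be making the motivic group completion step precise. Unlike the classical Bousfield--Friedlander situation, one must work at the level of Nisnevich sheaves while simultaneously handling both $S^{1}$- and $\bb G$-stabilization. The cancellation Axiom~(3), $\bb A^{1}$-invariance Axiom~(4), and stable local Nisnevich excision Axiom~(5) are the tools that allow the $\bb G$-deloopings and sheaf-theoretic excision needed to transport the classical argument to the bigraded motivic framework. The essential subtlety is that Axiom~(7) is phrased Nisnevich-locally rather than sectionwise, which is precisely the strength needed for a $\pi^{\nis}_{0}$-isomorphism after group completion in the Nisnevich topology.
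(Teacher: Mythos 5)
Your existence argument follows the paper's plan at the start---reduce to $\tau_{\geq 0}\cc E\in\SH(k)_{\geq 0}$ and apply the quasi-inverse $\Gamma\bb M_{\fr}$ from Theorem~\ref{theorem:firstmain}---but you assert that $\Gamma\bb M_{\fr}^{\cc E}(1_+,\pt)\simeq\Omega^\infty_{S^1}\Omega^\infty_{\bb G}\tau_{\geq 0}\cc E$ ``by construction'' without saying why. In the paper this is not automatic: the construction from the proof of Theorem~\ref{theorem:firstmain} replaces $\cc M^{\cc E}_{\fr}$ by $\bb M^{\cc E}_{\fr}=\Theta^\infty_{S^1}\cc M^{\cc E}_{\fr}$ (a sectionwise $\Omega$-spectrum in the $S^1$-direction) precisely so that $\Gamma\bb M^{\cc E}_{\fr}(1_+,U)(V)={\bf S}_\bullet(\bb S,\bb M^{\cc E}_{\fr}(U)(V))$ is its zeroth space; this also directly yields Axiom~(7), since the zeroth space of an $\Omega$-spectrum has abelian $\pi_0$. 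Your justification of Axiom~(7) (``$\pi^{\nis}_0$ of a motivic infinite loop space is abelian'') quietly presupposes what is being established.

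For the recognition direction, your route is genuinely different from the paper's and I think the difference costs you the proof. You propose to analyse a unit map $\cc X(1_+,\pt)\to\Omega^\infty_{S^1}\Omega^\infty_{\bb G}\cc X_{S^1,\bb G}$ via a motivic group-completion argument, using Axioms~(1)--(5) for $\pi^{\nis}_{\geq 1}$ and Axiom~(7) for $\pi^{\nis}_0$. The paper avoids that entirely by isolating a single fibrancy statement, Lemma~\ref{ove}: if $\cc X$ is very special then taking levelwise local fibrant replacements of $\cc X_{S^1,\gmp}$ already produces a \emph{motivically fibrant} bispectrum. The $S^1$-direction $\Omega$-spectrum property is the classical Bousfield--Friedlander/Dundas--Goodwillie--McCarthy fact for very special $\Gamma$-spaces applied sectionwise (and this is exactly where Axiom~(7) enters---grouplikeness means there is no group completion to perform, rather than a group completion map to be proved an equivalence), while the $\bb G$-direction fibrancy is supplied by Axioms~(2)--(5) via~\cite[Lemma~2.6]{GP5}. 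Once one has Lemma~\ref{ove}, the identification $\cc X(1_+,\pt)^f=\cc X^f_{S^1,\gmp;\,0,0}\simeq\Omega^\infty_{S^1}\Omega^\infty_{\bb G}\cc X^f_{S^1,\gmp}$ is immediate. Your proposal acknowledges that making the motivic group completion precise is ``the hard part'' and leaves it open; as written there is a genuine gap, because the Bousfield--Friedlander $\pi_{\geq 1}$-isomorphism argument is an $S^1$-only statement and does not by itself give the $\bb G$-delooping equivalences, which need the precise content of~\cite[Lemma~2.6]{GP5} (or equivalently Lemma~\ref{ove}), not just a general appeal to Axioms~(3)--(5).
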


\subsection*{Guide to the paper.}
For the convenience of the reader we begin \S\ref{section:preliminaries} by reviewing background on enriched categories,
with the aim of introducing framed motivic $\Gamma$-spaces.
As prime examples we discuss the motivic sphere spectrum $\one$,
algebraic cobordism $\MGL$,
motivic cohomology $\MZ$,
and Milnor-Witt motivic cohomology $\MWMZ$.
Our main results,
Theorems \ref{theorem:firstmain}, \ref{theorem:secondmain}, and \ref{theorem:thirdmain} are shown in \S\ref{fmg}.
Finally,
in \S\ref{hdtgtfe} we record some novel homotopical properties of framed motivic $\Gamma$-spaces.

\subsection*{Notation.}
Throughout the paper we employ the following notation.
\vspace{0.08in}

\begin{tabular}{l|l}
$k$, $\pt$ & infinite perfect field of exponential characteristic $e$, $\mathrm{Spec}(k)$\\
$\mathrm{Sm}/k$ & smooth separated schemes of finite type \\
$\mathrm{Sm}/k_{+}$ & framed correspondences of level zero\\
$\mathrm{Shv}_{\bullet}(\mathrm{Sm}/k)$ & closed symmetric monoidal category of pointed Nisnevich sheaves \\
$\M=\Delta^{\op}\mathrm{Shv}_{\bullet}(\mathrm{Sm}/k)$ & pointed motivic spaces, a.k.a.~pointed simplicial  Nisnevich sheaves \\
$\mathrm{Fr}_{+}(k)$ & the category of framed correspondences \\
$\mathrm{Pre}^{\fr}(k)$ & framed presheaves, a.k.a. presheaves of sets on $\mathrm{Fr}_{+}(k)$ \\
$i: \mathrm{Sm}/k\to\mathrm{Fr}_{+}(k)$ & the composite functor $\mathrm{Sm}/k\to\mathrm{Sm}/k_{+}\to\mathrm{Fr}_{+}(k)$ \\
%%%$\mathrm{Shv}^{\fr}(k)$ & framed Nisnevich sheaves, a.k.a.~$\cc F\in \mathrm{Pre}^{\fr}(k)$ such that $\cc F\circ i$ is a Nisnevich sheaf \\
$S^{s,t}$, $\Omega^{s,t}$, $\Sigma^{s,t}$ & motivic $(s,t)$-sphere, loop space, and suspension  \\
${\bf S}_{\bullet}$ & pointed simplicial sets
\end{tabular}
\vspace{0.05in}
\noindent

Our standard convention for motivic spheres is that $S^{2,1}\simeq\mathbb{P}^{1}\simeq T$
and $S^{1,1}\simeq\mathbb{A}^{1}\smallsetminus \{0\}$ as in \cite{MV}.

\subsection*{Relations to other works.}
Our approach in this paper is a homage to Segal's work on categories and cohomology theories \cite{S}.
Along the same line we use minimal machinery to achieve concrete models for infinite motivic loop spaces and motivic spectra with prescribed properties.
Based on Voevodsky's notes~\cite{Voe2},
the machinery of framed motives is developed in~\cite{GP1}.
As an application,
explicit computations of infinite motivic loop spaces are given as follows: $\Omega_{\bb P^1}^{\infty}\Sigma_{\bb P^1}^{\infty}A$,
$A\in\cc M$,
is locally equivalent to the space $C_{\ast}\mathrm{Fr}(A^c)^{\mathrm{gp}}$ (`gp' for group completion),
where $A^c$ is a projective cofibrant replacement of $A$
--- see~\cite[Section~10]{GP1}.
Based on~\cite{AGP,GP1,GP4,GNP},
a motivic recognition principle for infinite motivic loop spaces using the language of infinity categories is given in~\cite{EHKSY}.

\subsection*{Acknowledgments.}
The authors gratefully acknowledge support by the RCN Frontier Research Group Project no.~250399 ``Motivic Hopf Equations.''
Some work on this paper took place at the Institut Mittag-Leffler in Djursholm and the Hausdorff Research Institute for Mathematics in Bonn;
we thank both institutions for providing excellent working conditions, hospitality, and support.
{\O}stv{\ae}r was partially supported by Friedrich Wilhelm Bessel Research Award from the Humboldt Foundation,
Nelder Visiting Fellowship from Imperial College London,
Professor Ingerid Dal and sister Ulrikke Greve Dals prize for excellent research in the humanities,
and a Guest Professorship under the auspices of The Radbound Excellence Initiative.

\section{Framed motivic $\Gamma$-spaces}
\label{section:preliminaries}

We refer to \cite{Bl} and \cite{DRO2} for the projective motivic model structure on the closed symmetric
monoidal category of pointed motivic spaces $\M$.
This model structure is combinatorial, proper, simplicial, symmetric monoidal, and weakly finitely generated.
Let $\Delta[\bullet]$ be the standard cosimplicial simpicial set $n\longmapsto\Delta[n]$. If there is no
likelihood of confusion, we sometimes regard it as a cosimplicial smooth scheme, where each
$\Delta[n]$ is regarded as the disjoint union $\bigsqcup_{\Delta[n]}\pt$.
The simplicial function object between pointed motivic spaces $A$ and $B$ is given by
$$
{\bf S}_{\bullet}(A,B)=\Hom_{\M}(A\wedge\Delta[\bullet]_{+},B)
=\Hom_{\M}(A,B(\Delta[\bullet]\times-)).
$$
For every $U\in\mathrm{Sm}/k$ the Yoneda lemma identifies ${\bf S}_{\bullet}(U_{+},A)$ with the pointed simplicial set of sections $A(U)$.
\vspace{0.1in}

Recall $A\in\M$ is {\it finitely presentable\/} if the functor $\Hom_{\M}(A,-)$ preserves directed colimits.
For example,
the representable pointed motivic space $U_{+}$ is finitely presentable for every $k$-smooth scheme $U\in \mathrm{Sm}/k$.
\vspace{0.1in}

A collection $\cc C$ of finitely presentable pointed motivic spaces can be enriched in  $\M$ by means of the $\M$-enriched Hom-functor
\begin{multline}\label{tutu}
[A,B](X):=
\underline{\Hom}_{\M}(A,B)(X)
={\bf S}_{\bullet}(A\wedge X_{+},B)=\\
=\Hom_{\M}(A\wedge\Delta[\bullet]_{+},B(X\times-))
=\Hom_{\M}(A,B(X\times\Delta[\bullet]\times-)),
\quad A,B\in\cc C, X\in \mathrm{Sm}/k.
\end{multline}
The enriched composition in $\cc C$ is inherited from the enriched composition in $\M$.
We write $[\cc C,\M]$ for the  category of $\M$-enriched covariant functors from $\cc C$ to $\M$,
and refer to \cite[Section~4]{DRO1} for its projective model structure
--- the weak equivalences and fibrations are defined pointwise.
\vspace{0.1in}

Voevodsky \cite{Voe2} defined the morphisms in $\mathrm{Sm}/k_{+}$ by setting
$$
\mathrm{Sm}/k_{+}(X,Y)
:=
\Hom_{\mathrm{Shv}_{\bullet}(\mathrm{Sm}/k)}(X_{+},Y_{+}),
\quad X,Y\in \mathrm{Sm}/k.
$$
In case $X$ is connected we have $\mathrm{Sm}/k_{+}(X,Y)=\Hom_{\mathrm{Sm}/k}(X,Y)_{+}$ by \cite[Example 2.1]{Voe2}.

\begin{lem}
\label{cfr0}
With the notation above we have identifications of constant simplicial sets
$$
[U_{+},V_{+}](X)
=
\Hom_{\M}((U\times X)_{+},V_{+})
=
\mathrm{Sm}/k_{+}(U\times X,V),
$$
where $U,V,X\in \mathrm{Sm}/k$.
\end{lem}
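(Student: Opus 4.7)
The plan is to unwind the defining formula for the enriched hom, reduce the simplicial function object to a single set of morphisms, and then invoke Voevodsky's definition of $\mathrm{Sm}/k_{+}$. By definition,
$$
[U_{+},V_{+}](X)_{n}
=
\Hom_{\M}\bigl(U_{+}\wedge X_{+}\wedge \Delta[n]_{+},V_{+}\bigr).
$$
First I would identify $U_{+}\wedge X_{+}=(U\times X)_{+}$, which is the standard fact that the functor $(-)_{+}\colon \mathrm{Shv}(\mathrm{Sm}/k)\to \mathrm{Shv}_{\bullet}(\mathrm{Sm}/k)$ is symmetric monoidal, sending the cartesian product to the smash product. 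Thus it suffices to prove that the simplicial set with $n$-simplices $\Hom_{\M}\bigl((U\times X)_{+}\wedge \Delta[n]_{+},V_{+}\bigr)$ is constant on the set $\Hom_{\M}\bigl((U\times X)_{+},V_{+}\bigr)$, and that the latter equals $\mathrm{Sm}/k_{+}(U\times X,V)$.

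For the constancy claim, the crucial input is that $V_{+}$, regarded as an object of $\M=\Delta^{\op}\mathrm{Shv}_{\bullet}(\mathrm{Sm}/k)$, is a constant simplicial pointed sheaf (all face and degeneracy operators are identities). A morphism $f\colon (U\times X)_{+}\wedge \Delta[n]_{+}\longrightarrow V_{+}$ is a family of pointed sheaf maps $f_{m,\sigma}\colon (U\times X)_{+}\to V_{+}$ indexed by $m\geq 0$ and $\sigma\in\Delta[n]_{m}$, and the compatibility with the simplicial structure forces $f_{m,\sigma}=f_{m',\alpha^{\ast}\sigma}$ for every $\alpha\colon [m']\to [m]$ in $\Delta$, because $(U\times X)_{+}$ is also a constant simplicial object and $V_{+}(\alpha)=\mathrm{id}$. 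Since $\Delta[n]$ is connected, all $f_{m,\sigma}$ coincide with the common value $f_{0,v}\colon (U\times X)_{+}\to V_{+}$ at any vertex $v$. Conversely, any pointed sheaf map $(U\times X)_{+}\to V_{+}$ extends uniquely by placing it on every simplex of $\Delta[n]$. This gives a natural bijection
$$
\Hom_{\M}\bigl((U\times X)_{+}\wedge \Delta[n]_{+},V_{+}\bigr)
\;\cong\;
\Hom_{\M}\bigl((U\times X)_{+},V_{+}\bigr),
$$
and a quick check shows this bijection is compatible with the cosimplicial structure on $\Delta[\bullet]$: any simplicial map $\Delta[n']\to \Delta[n]$ acts on restrictions at a vertex by relocating the vertex, which preserves the common value. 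Hence $[U_{+},V_{+}](X)$ is constant.

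For the last identification, the codomain $V_{+}$ and the domain $(U\times X)_{+}$ are constant simplicial pointed sheaves, so
$$
\Hom_{\M}\bigl((U\times X)_{+},V_{+}\bigr)
=
\Hom_{\mathrm{Shv}_{\bullet}(\mathrm{Sm}/k)}\bigl((U\times X)_{+},V_{+}\bigr),
$$
and this is $\mathrm{Sm}/k_{+}(U\times X,V)$ by the very definition recalled immediately before the statement of the lemma. I do not expect any genuine obstacle here: every step is a formal consequence of the definitions. The only delicate bookkeeping is checking that collapsing $\Delta[n]_{+}$ is compatible with the cosimplicial structure on $\Delta[\bullet]$ so that the resulting constant simplicial set really is $\Hom_{\M}((U\times X)_{+},V_{+})$ in a canonical way, which reduces to the observation above that all vertex restrictions agree.
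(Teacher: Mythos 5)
Your proof is correct and follows the same overall approach as the paper's: unwind the enriched hom $[U_{+},V_{+}](X)=\uhom_{\M}(U_{+},V_{+})(X)$ to a mapping space into the constant simplicial sheaf $V_{+}$, use the monoidality $U_{+}\wedge X_{+}=(U\times X)_{+}$, and invoke Voevodsky's definition of $\mathrm{Sm}/k_{+}$. The paper's proof is essentially a two-line assertion ("by definition \dots it is evident"), whereas you make the constancy claim explicit: since $\mathbf{S}_{\bullet}((U\times X)_{+},V_{+})_{n}=\Hom_{\M}((U\times X)_{+}\wedge\Delta[n]_{+},V_{+})$ is not manifestly independent of $n$, one does need that $V_{+}$ and $(U\times X)_{+}$ are constant simplicial objects and that $\Delta[n]$ is connected, as you argue. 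That step is precisely the content buried in the paper's "by definition"; your version supplies the bookkeeping without otherwise departing from the paper's route.
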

\begin{proof}
By definition we have
$$
[U_{+},V_{+}](X)
=
\uhom_{\M}(U_{+},V_{+})(X)
=
\Hom_{\M}((U\times X)_{+},V_{+}).
$$
It is evident that $\Hom_{\M}((U\times X)_{+},V_{+})=\mathrm{Sm}/k_{+}(U\times X,V)$.
\end{proof}

\begin{rem}
In fact $\mathrm{Sm}/k_{+}(-,V)$, $V\in \mathrm{Sm}/k$, is the Nisnevich sheaf associated to the presheaf $U\longmapsto\Hom_{\mathrm{Sm}/k}(U,V)\sqcup \pt$.
\end{rem}

Our first example is Segal's category $\Gamma^{\op}$ of pointed finite sets and pointed maps.
\begin{ex}
\label{firstgammaframed}
As in \cite[Section 5]{GP1} we view $\Gamma^{\op}$ as a full subcategory of $\M$ by
sending $K\in\Gamma^{\op}$ to $(\sqcup_{K\smallsetminus\ast} \pt)_{+}$
--- the coproduct is indexed by the non-based elements in $K$.
This turns $\Gamma^{\op}$ into a symmetric monoidal $\M$-category.
Hence $[\Gamma^{\op},\M]$ is a closed symmetric monoidal category by~\cite{Day}.

We claim that $[\Gamma^{\op},\M]$ can be identified with the category $\Gamma\M$ of covariant
functors from $\Gamma^{\op}$ to $\M$ sending $0_{+}$ to the basepoint $*$ of $\cc M$
---
in this case $\cc C=\{\sqcup_{K\smallsetminus *} \pt\mid K\in\Gamma^{\op}\}$:
An $\M$-enriched functor $\cc X\in[\Gamma^{\op},\M]$ sends $K\in\Gamma^{\op}$ to $\cc X(\sqcup_{K\smallsetminus *} \pt^{})\in\cc M$ and for $K,L\in\Gamma^{\op}$ there is a morphism
$$
\alpha_{K,L}
\colon
\cc X(\sqcup_{K\smallsetminus *}\pt)\bigwedge_{\M}[\sqcup_{K\smallsetminus *}\pt,\sqcup_{L\smallsetminus *}\pt]
\longrightarrow
\cc X(\sqcup_{L\smallsetminus *}\pt).
$$
Here the motivic space $[\sqcup_{K\smallsetminus*}\pt,\sqcup_{L\smallsetminus*}\pt]$ is given by
$$
U
\longmapsto
\Gamma^{\op}(\sqcup_{K\times n(U)_{+}\smallsetminus (*,+)}\pt,\sqcup_{L\smallsetminus*}\pt),
$$
where $n(U)$ is the number of connected components of $U\in\mathrm{Sm}/k$ and $n(U)_{+}=\{0,1,\ldots,n{(U)}\}$.
Since $\cc X$ takes values in simplicial sheaves it follows that
$$
\cc X(K)(U)
=
\cc X(K)(U_1)\times\bl{n(U)}\cdots\times\cc X(K)(U_{n{(U)}}),
$$
and consequently we have
$$
\alpha_{K,L}(U)
=
\alpha_{K,L}(U_1)\times\bl{n(U)}\cdots\times\alpha_{K,L}(U_{n{(U)}}).
$$

To a morphism $f:K\longrightarrow L$ in $\Gamma^{\op}$ we associate the morphism $\cc X(\sqcup_{K\smallsetminus *}\pt)\longrightarrow \cc X(\sqcup_{L\smallsetminus *}\pt)$
with $U$-sections
$$
\alpha_{K,L}(U_1)(f)\times\bl{n(U)}\cdots\times\alpha_{K,L}(U_{n{(U)}})(f).
$$
Clearly,
this yields the identification of $[\Gamma^{\op},\M]$ with pointed functors from $\Gamma^{\op}$ to $\M$.
\end{ex}

We are passing to the definition of the category of framed motivic spaces $\M^{\fr}$ and to its natural
enrichment over $\M$.
Let $\mathrm{Fr}_+(k)$ be the category of framed correspondences as in ~\cite[Section~2]{GP1}. Let
$\mathrm{Pre}^{\fr}(k)$ be the category of framed presheaves, that is the category of presheaves of sets
on $\mathrm{Fr}_{+}(k)$.
Let $i: \mathrm{Sm}/k\to\mathrm{Sm}/k_{+}\to\mathrm{Fr}_{+}(k)$ be the composite functor.
Recall from~\cite[Section~2]{GP1} that a framed Nisnevich sheaf on $\mathrm{Sm}/k$ is a framed presheaf
such that its restriction to $\mathrm{Sm}/k$
via the functor $i$
is a Nisnevich sheaf. Let $Shv_\bullet^{\fr}(k)$ denote the category
of pointed framed Nisnevich sheaves.
The morphisms in this category are just morphisms of pointed framed presheaves.
%%%whose morphisms of sheaves respect framed correspondences.
The {\it category of framed motivic spaces\/} $\cc M^{\fr}$ is the category of simplicial objects in
$Shv_\bullet^{\fr}(k)$. There is a canonically induced faithful functor
$\iota:\M^{\fr}\rightarrow\M$ obtained from the composite
$i: \mathrm{Sm}/k\to\mathrm{Sm}/k_{+}\to\mathrm{Fr}_{+}(k)$.

Following~\cite[Section~6]{Voe2} there is a natural pairing
$\mathrm{Sm}/k_{+} \times \mathrm{Fr}_{+}(k)\xrightarrow{\otimes} \mathrm{Fr}_{+}(k)$
taking $(X,Y)$ to $X\times Y$ and $(f,\alpha)$ to $f\times \alpha$. In what follows
this pairing will be used systematically without referring to it.
We also use it in the natural enrichment of $\M^{\fr}$ over $\cc M$.

First, we can associate a framed Nisnevich sheaf $\cc F(X\times -)$ to 
every framed Nisnevich sheaf $\cc F$ and every $X\in \mathrm{Sm}/k_{+}$. In detail,
given $\alpha\in \mathrm{Fr}_n(U',U)$ put
$\alpha^*: \cc F(X\times U)\to \cc F(X\times U')$
to be $(\id_X\times \alpha)^*$.
If $\cc F$ is a pointed framed Nisnevich sheaf then the framed Nisnevich sheaf 
$\cc F(X\times -)$ is pointed also.

Second, every morphism $f: X'\to X$ in $\mathrm{Sm}/k_{+}$ induces a morphism
of framed sheaves $f^*: \cc F(X\times -)\to \cc F(X'\times -)$. Namely, if $U\in \mathrm{Fr}_+(k)$ one sets
$f^*: \cc F(X\times U)\to \cc F(X'\times U)$ to be $(f\times \id_U)^*$.
If $\cc F$ is a pointed framed Nisnevich sheaf, then the morphism of framed sheaves
$f^*: \cc F(X\times -)\to \cc F(X'\times -)$
is a morphism of pointed framed Nisnevich sheaves.

Finally, similarly to~\eqref{tutu}, $\M^{\fr}$ is naturally enriched over $\cc M$. Namely,
$$
{\M}(A,B)(X):=\Hom_{\M^{\fr}}(A,B(X\times\Delta[\bullet]\times-)),
\quad A,B\in\M^{\fr}, \ X\in \mathrm{Sm}/k.
$$
The enriched composition in $\M^{\fr}$ is inherited from the enriched composition in $\cc M$.

%\begin{dfn}
%\label{gammaframed}
%Enriched functors in $[\Gamma^{\op},\M]$ are said to be {\it framed\/} if they are of the form
%$\Gamma^{\op}\to\cc M^{\fr}\xrightarrow{\iota}\cc M$.
%\end{dfn}

Our second example is Voevodsky's category of framed correspondences of level zero.

\begin{ex}
\label{exgammaframed}
We enrich $\mathrm{Sm}/k_{+}$ in $\M$ by setting
$$
[U,V]
:=
\uhom_{\M}(U_{+},V_{+}),
\quad
U,V\in \mathrm{Sm}/k_{+}.
$$
This turns $\mathrm{Sm}/k_{+}$ into a symmetric monoidal $\M$-category with tensor products $U\times V\in\mathrm{Sm}/k$.
It follows that $[\mathrm{Sm}/k_{+},\M]$ is a symmetric monoidal $\M$-category \cite{Day}.
Framed correspondences of level zero form the underlying category of the $\cc M$-category $\mathrm{Sm}/k_{+}$.
According to Lemma \ref{cfr0} the pointed motivic space $[U,V]$ has $Y$-sections the constant simplicial set
$$
[U,V](Y)
=
\Hom_{\cc M}((U\times Y)_{+},V_{+})
=
\mathrm{Sm}/k_{+}(U\times Y,V).
$$
Owing to the $\M$-enrichment every $\cc X\in[\mathrm{Sm}/k_{+},\M]$ gives rise to a morphism
$$
[U,V]
\longrightarrow
\underline{\Hom}_{\cc M}(\cc X(U),\cc X(V)).
$$
On $Y$-sections we obtain a morphism from $[U,V](Y)$ to
\begin{multline*}
\underline{\Hom}_{\cc M}(\cc X(U),\cc X(V))(Y)
=
{\bf S}_{\bullet}(\cc X(U)\wedge Y_{+},\cc X(V))
=\\
{\bf S}_{\bullet}(\cc X(U),\cc X(V)(Y\times-))
=
\Hom_{\cc M}(\cc X(U)\wedge\Delta[\bullet]_+,\cc X(V)(Y\times-)).
\end{multline*}
\end{ex}

%\begin{dfn}
%\label{smkframed}
%An enriched functor $\cc X\in[\mathrm{Sm}/k_{+},\M]$ is {\it framed\/} if the underlying functor
%$\cc X:\mathrm{Sm}/k_{+}\to\M$ is a composite functor of the form $\mathrm{Sm}/k_{+}\to\cc M^{\fr}\xrightarrow\iota\M$
%and the canonical morphism
%$$
%[U,V](Y)
%\longrightarrow
%\Hom_{\cc M}(\cc X(U),\cc X(V)(Y\times-))
%$$
%factors through $\Hom_{\cc M^{\fr}}(\cc X(U),\cc X(V)(Y\times-))$ for all $U,V,Y\in\textrm{Sm}/k_{+}$.
%\end{dfn}

The monoidal product $\Gamma^{\op}\boxtimes \mathrm{Sm}/k_{+}$ is the $\M$-category
with objects $\Ob\Gamma^{\op}\times\Ob \mathrm{Sm}/k_{+}$ and
$$
[(K,A),(L,B)]
=
[K,L]
\times
[A,B].
$$
Note that $\Gamma^{\op}\boxtimes \mathrm{Sm}/k_{+}$ is a symmetric monoidal $\M$-category.
%We are interested in certain objects in the closed symmetric monoidal category
%of $\M$-enriched functors $[\Gamma^{\op}\boxtimes \mathrm{Sm}/k_{+},\M]$.

\begin{dfn}
\label{dfn:gammasp}
\begin{itemize}
\item[(1)]
A {\it motivic $\Gamma$-space\/} is an $\M$-enriched functor $\cc X\colon \Gamma^{\op}\boxtimes \mathrm{Sm}/k_{+}\longrightarrow\M$.
\item[(2)]
A {\it framed motivic $\Gamma$-space\/} is an $\M$-enriched functor $\cc X:\Gamma^{\op}\boxtimes \mathrm{Sm}/k_{+}\longrightarrow\M^{\fr}$.
\end{itemize}
\end{dfn}

\begin{rem}
Let $\Gamma^{\op}\times\mathrm{Sm}/k_{+}$ denote the underlying category of the $\cc M$-category
$\Gamma^{\op}\boxtimes\mathrm{Sm}/k_{+}$. Every motivic $\Gamma$-space
$\cc X\colon \Gamma^{\op}\boxtimes \mathrm{Sm}/k_{+}\longrightarrow\M$ gives rise to a functor
$\cc X\colon \Gamma^{\op}\times \mathrm{Sm}/k_{+}\longrightarrow\M$ denoted by the same letter.

Unravelling the previous definition, a framed motivic $\Gamma$-space is equivalent to giving the following data:
\begin{itemize}
\item[$\diamond$] an $\cc M$-functor
$\cc X:\Gamma^{\op}\boxtimes \mathrm{Sm}/k_{+}\to\M$;
\item[$\diamond$] a functor
$\cc X':\Gamma^{\op}\times \mathrm{Sm}/k_{+}\to\M^{\fr}$;
\item[$\diamond$]
the induced functor $\cc X:\Gamma^{\op}\times\mathrm{Sm}/k_{+}\to\cc M$
equals the composite functor $\Gamma^{\op}\times\mathrm{Sm}/k_{+}\xrightarrow{\cc X'}\cc M^{\fr}\xrightarrow{\iota}\cc M$
such that the canonical morphism
$$
[U,V](Y)
\longrightarrow
\Hom_{\cc M}(\cc X(K,U),\cc X(K,V)(Y\times-))
$$
factors through $\Hom_{\cc M^{\fr}}(\cc X'(K,U),\cc X'(K,V)(Y\times-))$ for all
$K\in\Gamma^{\op}$, $U,V,Y\in\textrm{Sm}/k_{+}$.
\end{itemize}
\end{rem}

\begin{evaluationfunctors}
Every motivic $\Gamma$-space $\cc X\in [\Gamma^{\op}\boxtimes \mathrm{Sm}/k_{+},\M]$
and $U\in \mathrm{Sm}/k_{+}$ gives rise to an enriched functor $\cc X(U)\in [\Gamma^{\op},\M]$.
In Example~\ref{firstgammaframed} we identified $\cc X(U)$ with the datum of a pointed functor from $\Gamma^{\op}$ to $\M$.
Following~\cite[Example 2.1.2.1]{DGM} by the sphere spectrum we mean the inclusion
$\bb S:\Gamma^{\op}\hookrightarrow {\bf S}_{\bullet}$. By taking the left Kan extension along the sphere spectrum
$\bb S\colon\Gamma^{\op}\hookrightarrow {\bf S}_{\bullet}$ we obtain the
evaluation functor with values in motivic $S^{1}$-spectra
\begin{equation}
\label{equation:S1evaluation}
\ev_{S^{1}}
\colon
[\Gamma^{\op},\M]
\longrightarrow
\Sp_{S^{1}}(k)
\end{equation}
$$
\cc X(U)\longmapsto
\cc X(\bb S,U)
=
(\cc X(S^{0})(U),\cc X(S^{1})(U),\cc X(S^2)(U),\ldots).
$$
We refer to $\cc X(\bb S,\pt)$ as the {\it underlying motivic $S^{1}$-spectrum\/} of $\cc X$.
\vspace{0.1in}

On the other hand,
for $K\in\Gamma^{\op}$ we obtain an enriched functor $\cc X(K)\in [\mathrm{Sm}/k_{+},\M]$
--- see Example~\ref{exgammaframed}.
Moreover,
for $U,V\in \mathrm{Sm}/k_{+}$ there are natural morphisms in $\M$
$$
V_{+}
\longrightarrow
[U,U\times V]\longrightarrow\underline{\Hom}_{\M}(\cc X(K)(U),\cc X(K)(U\times V)).
$$
By adjunction we obtain morphisms
\begin{multline}
\label{equation:mapxauv}
\quad\quad\cc X(K)(U)\wedge V_{+}\longrightarrow\cc X(K)(U\times V)\quad\textrm{and}\\
\cc X(K)(U)\longrightarrow\underline{\Hom}_{\cc M}(V_+,\cc X(K)(U\times V)).\quad\quad
\end{multline}
\vspace{-0.1in}

The simplices of $\gmp\in\Delta^{\op}\mathrm{Sm}/k_+$ consist of finite disjoint unions ${\bb G}^{\sqcup_{<\infty}}_{m}$
of copies of the multiplicative group scheme ${\bb G}_{m}$ and $\pt$. Namely,
the simplices are ${\bb G}_{m}$, ${\bb G}_{m}\sqcup\pt$, ${\bb G}_{m}\sqcup\pt\sqcup\pt$, $\ldots$
(we also refer the reader to~\cite[Notation~8.1]{GP1}).
As a special case of \eqref{equation:mapxauv} we have
\begin{multline}
\label{equation:mapfdu}
\cc X(K)(U)\wedge ({\bb G}^{\sqcup_{<\infty}}_{m})_{+}
\longrightarrow
\cc X(K)(U\times {\bb G}^{\sqcup_{<\infty}}_{m})\quad\textrm{and}\\
\cc X(K)(U)\longrightarrow\underline{\Hom}_{\cc M}(({\bb G}^{\sqcup_{<\infty}}_{m})_{+},
\cc X(K)(U\times {\bb G}^{\sqcup_{<\infty}}_{m})).
\end{multline}
For the smash powers of $\gmp$ we define the morphisms
\begin{multline}
\label{equation:mapgmpn}
\quad\cc X(K)(\gmpn)\wedge\gmp_+\longrightarrow\cc X(K)(\bb G^{\wedge n+1})\quad\textrm{and}\\
\cc X(K)(\gmpn)\longrightarrow\underline{\Hom}_{\cc M}(\gmp_+,\cc X(K)(\bb G^{\wedge n+1}))\quad
\end{multline}
to be the geometric realization of
$$
l
\longmapsto\{\cc X(K)((\bb G^{\wedge n})_l)\wedge(\gmp_+)_l
\longrightarrow
\cc X(K)((\bb G^{\wedge (n+1)})_l)\}
$$
and
$$
l
\longmapsto\{\cc X(K)((\bb G^{\wedge n})_l)
\longrightarrow\underline{\Hom}_{\cc M}
((\gmp_+)_l,\cc X(K)((\bb G^{\wedge (n+1)})_l))\}
$$
obtained from \eqref{equation:mapfdu}.
Due to \eqref{equation:mapgmpn} we obtain the evaluation functor with values in motivic $\gmp$-spectra
\begin{equation}
\label{equation:Gevaluation}
\ev_{\bb G}
\colon
[\mathrm{Sm}/k_{+},\M]
\longrightarrow
\Sp_{\gmp}(k)
\end{equation}
$$
\cc X(K)\longmapsto (\cc X(K)(\pt),\cc X(K)(\gmp),\cc X(K)(\bb G^{\wedge 2}),\ldots).
$$
\vspace{-0.1in}

We refer to \cite[Chapter 3, Section 2.3]{DLORV} for a discussion of the category $\Sp_{S^{1},\gmp}(k)$ of motivic $(S^{1},\gmp)$-bispectra.
Its associated homotopy category is equivalent to $\SH(k)$.
Combining \eqref{equation:S1evaluation} and \eqref{equation:Gevaluation} we obtain the evaluation functor:
\begin{equation}
\label{equation:S1Gevaluation}
\ev_{S^{1},\gmp}
\colon
[\Gamma^{\op}\boxtimes \mathrm{Sm}/k_{+},\M]
\longrightarrow
\Sp_{S^{1},\gmp}(k)
\end{equation}
$$
\cc X
\longmapsto
\cc X_{S^{1},\bb G}
=
\ev_{S^{1},\gmp}(\cc X).
$$
More precisely,
for $i,j\geq 0$ we have
$$
\ev_{S^{1},\gmp}(\cc X)_{i,j}=\cc X(S^{i},\bb G^{\wedge j})\in\M.
$$
The evident structure maps turn $\cc X_{S^{1},\bb G}$ into a motivic $(S^{1},\gmp)$-bispectrum.

In turn, let $[\Gamma^{\op}\boxtimes \mathrm{Sm}/k_{+},\M^{\fr}]$ denote the category of $\cc M$-enriched
functors from $\Gamma^{\op}\boxtimes \mathrm{Sm}/k_{+}$ to $\M^{\fr}$.
Its objects are the framed motivic $\Gamma$-spaces following Definition~\ref{dfn:gammasp}. if
$\cc X$ is a framed motivic $\Gamma$-space then the structure morphisms
   $$\cc X(S^{i},\bb G^{\wedge j})\to\underline{\Hom}(S^1,\cc X(S^{i+1},\bb G^{\wedge j}))$$
   $$\cc X(S^{i},\bb G^{\wedge j})\to\underline{\Hom}(\bb G_+,\cc X(S^{i+1},\bb G^{\wedge j+1}))$$
are morphisms in $\cc M^{\fr}$. Therefore
$\cc X_{S^{1},\bb G}\in \Sp^{\fr}_{S^{1},\gmp}(k)$ is a framed motivic $(S^{1},\bb G)$-bispectrum
in the sense of~\cite[Definition~2.1]{GP5}.
Similarly to~\eqref{equation:S1Gevaluation}
we obtain the evaluation functor:
\begin{equation}
\label{equation:S1Gevaluationfr}
\ev_{S^{1},\gmp}
\colon
[\Gamma^{\op}\boxtimes \mathrm{Sm}/k_{+},\M^{\fr}]
\longrightarrow
\Sp_{S^{1},\gmp}^{\fr}(k)
\end{equation}
$$
\cc X
\longmapsto
\cc X_{S^{1},\bb G}
=
\ev_{S^{1},\gmp}(\cc X).
$$
\end{evaluationfunctors}

\begin{ex}
\label{example:liuhvkbh}
For every $X\in\textrm{Sm}/k$ we can form the motivic $\Gamma$-space with sections
$$
(K,U)
\longmapsto
\mathrm{Sm}/k_+(-,K\otimes (X\times U)).
$$
Its evaluation is the suspension bispectrum $\Sigma^\infty_{S^1}\Sigma^\infty_{\gmp}X_+$ of $X$.
Similarly,
we can form the special framed motivic $\Gamma$-space $\uhom(X,C_{\ast}\mathrm{Fr})$ with sections
$$
(K,U)
\longmapsto
C_{\ast}\mathrm{Fr}(-,K\otimes (X\times U)).
$$
Its underlying motivic $S^{1}$-spectrum $\uhom(X,C_{\ast}\mathrm{Fr})(\bb S,\pt)$ is the framed motive of $X$~\cite{GP1}.

There is a natural morphism of motivic $\Gamma$-spaces
\begin{equation}
\label{equation:kuygrjfv}
\mathrm{Sm}/k_+(-,-\otimes (X\times-))\longrightarrow C_{\ast}\mathrm{Fr}(-,-\otimes (X\times-)).
\end{equation}
By~\cite[Theorem~11.1]{GP1} the evaluation functor in~\eqref{equation:S1Gevaluation} takes the morphism in \eqref{equation:kuygrjfv} to a stable motivic equivalence.
In particular,
the special framed motivic $\Gamma$-space $\uhom(\pt,C_{\ast}\mathrm{Fr})$ is a model for the motivic sphere $\one$.

By linearization we obtain the special framed motivic  $\Gamma$-space $\uhom(X,C_{\ast}\bb Z\mathrm{F})$ with sections
$$
(K,U)
\longmapsto
C_{\ast}\bb Z\mathrm{F}(-,K\otimes (X\times U)).
$$
The underlying motivic $S^{1}$-spectrum $\uhom(X,C_{\ast}\bb Z\mathrm{F})(\bb S,\pt)$ is the linear framed motive of $X$~\cite{GP1}.
%As above, if we define a motivic $\Gamma$-space by
%$$
%(K,U)\in\Gamma^{\op}\boxtimes \mathrm{Sm}/k_{+}
%\longmapsto
%\widetilde{\bb Z}[\textrm{Sm}/k_+(-,X\otimes K\otimes U)]\in\M,
%$$
%then~\cite[Theorem~11.1]{GP1} and~\cite[Theorem~1.2]{GNP} imply that
%a natural morphism of motivic $\Gamma$-spaces
%   $$\widetilde{\bb Z}[\mathrm{Sm}/k_+(-,X\otimes-)]\longrightarrow C_{\ast}\bb Z\mathrm{F}(-,X\otimes-)$$
%induces a stable motivic equivalence on the evaluation bispectra (see~\cite[Section~6]{GG} as well).
\end{ex}

\begin{ex}
\label{example:lukyegygve}
Let $\cc E$ be a motivic symmetric Thom $T$- or $T^{2}$-spectrum with a bounding constant $d\leq 1$ and contractible alternating group action in the sense of~\cite[Section 1]{GN}
--- the main examples are algebraic cobordism $\MGL$~\cite{VoeICM} and the $T^2$-spectra $\MSL$, $\MSp$ in~\cite{PW} (in all of these cases $d=1$).
Under these assumptions there exists a special framed motivic $\Gamma$-space $\uhom(X,C_{\ast}\mathrm{Fr}^{\cc E})$ with sections
$$
(K,U)
\longmapsto
C_{\ast}\mathrm{Fr}^{\cc E}(-,K\otimes (X\times U)).
$$
The evaluation $\ev_{S^{1},\gmp}(\uhom(X,C_{\ast}\mathrm{Fr}^{\cc E}))$ agrees with $\cc E \wedge X_+$ by the proof of~\cite[Theorem 9.13]{GN}.
Moreover,
$\uhom(X,C_{\ast}\mathrm{Fr}^{\cc E})(\bb S,\pt)$ is the $\cc E$-framed motive of $X$ in the sense of \cite[Section~9]{GN}.

Likewise,
we obtain the special framed motivic $\Gamma$-space $\uhom(X,C_{\ast}\bb Z\mathrm{F}^{\cc E})$,
whose underlying motivic $S^{1}$-spectrum is the linear $\cc E$-framed motive of $X$ defined in \cite[Section~9]{GN}.
\end{ex}

\begin{ex}
\label{example:asdefcoljh}
Suppose that ${\bf A}$ is a strict category of Voevodsky correspondences in the sense of~\cite[Definition 2.3]{GG} 
and there exists a functor $\mathrm{Fr}_{+}(k)\longrightarrow{\bf A}$
which is the identity map on objects.
Examples include finite Milnor-Witt correspondences $\wt{Cor}$~\cite{CF},
finite correspondences $Cor$~\cite{Voe1},
and $K_0^{\oplus}$-correspondences~\cite{Wlk}.
We define $C_{\ast}{\bf A}$ to be the very special framed motivic $\Gamma$-space with sections
the Suslin complex of the Nisnevich sheaf ${\bf A}(-,K\otimes U)^{\nis}$
--- sectionwise we have
$$
(K,U)
\longmapsto
C_{\ast}{\bf A}(-,K\otimes U)^{\nis}.
$$
Note that $\underline{\Hom}(X,C_{\ast}{\bf A})(\bb S,\pt)$ is
the ${\bf A}$-motive of $X$ defined in \cite[Section 2]{GG}, where $\underline{\Hom}(X,C_{\ast}{\bf A})$
stands for the very special framed motivic $\Gamma$-space with sections
$(K,U)\mapsto C_{\ast}{\bf A}(-,K\otimes (X\times U))$.
\end{ex}

\begin{rem}
\label{klbogrj}
The motivic $\Gamma$-spaces in Examples \ref{example:liuhvkbh}, \ref{example:lukyegygve}, and \ref{example:asdefcoljh} share the common trait of factoring through the functor
$\otimes\colon \Gamma^{\op}\boxtimes \mathrm{Sm}/k_{+}\longrightarrow\mathrm{Sm}/k_{+}$.
\end{rem}

\section{Special framed motivic $\Gamma$-spaces and infinite motivic loop spaces}
\label{fmg}
Let $\cc E$ be a motivic $(S^{1},\gmp)$-bispectrum.
Using the $n$th weight motivic $S^{1}$-spectrum $\cc E(n)$ of $\cc E$
--- defined by $\cc E(n)_{i}=\cc E_{i,n}$ ---
we write $\cc E=(\cc E(0),\cc E(1),\ldots)$.
For integers $p,n\in\bb Z$ let $\pi^{\bb A^1}_{p,n}\cc E$ be the Nisnevich sheaf on $\textrm{Sm}/k$ associated to the presheaf
$$
U
\longmapsto
\SH(k)(U_{+}\wedge S^{p-n}\wedge\gmpn,\cc E).
$$
Recall that $\cc E$ is connective if $\pi^{\bb A^1}_{p,n}\cc E=0$ for all $p<n$.
Similarly,
a motivic $S^1$-spectrum $\cc E\in \Sp_{S^1}(k)$ is connective if $\pi^{\bb A^1}_{n}\cc E=0$ for all $n<0$.
For a Nisnevich sheaf $F$ of abelian groups on $\mathrm{Sm}/k$,
let $F_{-1}$ denote the Nisnevich sheaf given by $U\longmapsto\ker(1^{\ast}\colon F(U\times\bb G_m)\longrightarrow F(U))$.

\begin{lem}
\label{connective}
A framed motivic $(S^{1},\gmp)$-bispectrum $\cc E=(\cc E(0),\cc E(1),\ldots)$ in the sense of~\cite[Section~2]{GP5} is connective
if and only if $\cc E(n)$ is a connective motivic $S^{1}$-spectrum for every $n\geq 0$.
\end{lem}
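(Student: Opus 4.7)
The plan is to reduce the lemma to a natural identification of Nisnevich sheaves on $\mathrm{Sm}/k$,
\[
\pi^{\bb A^1}_{p,n}\cc E \;\cong\; \pi^{\bb A^1}_{p-n}\cc E(n),
\]
for every $n\geq 0$ and $p\in\bb Z$; granted this, the statement becomes a routine change of variables. Before proceeding I would replace $\cc E$ by its stable fibrant replacement --- an $\Omega$-bispectrum --- without loss of generality, since connectivity on both sides is invariant under stable motivic equivalences. With $\cc E$ fibrant each $\cc E(n)$ is a motivically fibrant $S^{1}$-spectrum and the structure maps $\cc E(n)\longrightarrow\Omega_{\gmp}\cc E(n+1)$ are motivic weak equivalences.

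To establish the identification I would invoke the Quillen adjunction
\[
F_n\colon\Sp_{S^{1}}(k)\rightleftarrows\Sp_{S^{1},\gmp}(k)\colon\ev_n,
\]
whose left adjoint $F_n$ freely places an $S^{1}$-spectrum at bispectrum-weight $n$ and whose right adjoint $\ev_n$ is the weight-$n$ evaluation $\cc E\longmapsto\cc E(n)$. The key observation is the stable equivalence $F_n Y\simeq Y\wedge\gmp^{\wedge n}$ in $\SH(k)$, which expresses that shifting bispectrum weight up by $n$ is intertwined with smashing by $\gmp^{\wedge n}$; this can be verified on bigraded homotopy sheaves via the standard formula $\pi_{p,q}(-\wedge S^{r,s})=\pi_{p-r,q-s}(-)$ together with the universal property of $F_n$. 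Combined with the $F_n\dashv\ev_n$ adjunction, for every $U\in\mathrm{Sm}/k$ we obtain
\[
\pi^{\bb A^1}_{p,n}\cc E(U) = [F_n(U_+\wedge S^{p-n}),\cc E]_{\SH(k)} = [U_+\wedge S^{p-n},\cc E(n)]_{\SH_{S^{1}}(k)} = \pi^{\bb A^1}_{p-n}\cc E(n)(U).
\]

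With the identification in hand the lemma follows by a direct change of variables: $\cc E$ is connective iff $\pi^{\bb A^1}_{p,n}\cc E=0$ for all $p<n$, iff $\pi^{\bb A^1}_{p-n}\cc E(n)=0$ for all $n\geq 0$ and $p<n$, iff --- setting $j=p-n$ --- $\pi^{\bb A^1}_{j}\cc E(n)=0$ for all $j<0$ and $n\geq 0$, which is exactly the condition that every $\cc E(n)$ is a connective motivic $S^{1}$-spectrum. The main technical hurdle will be the stable equivalence $F_n Y\simeq Y\wedge\gmp^{\wedge n}$ in $\SH(k)$ together with matching up the cofibrant and fibrant replacements across $\Sp_{S^{1}}(k)$ and $\Sp_{S^{1},\gmp}(k)$; this should follow from standard features of the motivic bispectrum model structure as described in~\cite{DLORV}.
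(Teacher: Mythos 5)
The key identification you propose,
\[
\pi^{\bb A^1}_{p,n}\cc E \;\cong\; \pi^{\bb A^1}_{p-n}\cc E(n)\quad (n\geq 0),
\]
is not correct, and the error is located precisely in the claimed equivalence $F_n Y\simeq Y\wedge\gmp^{\wedge n}$. The free functor $F_n$ places its generator in weight $n$, and in the homotopy category this is a \emph{de}suspension, not a suspension: $F_n Y\simeq Y\wedge\gmp^{\wedge-n}$, exactly as $F_n\bb S^0\simeq\Sigma^{-n}\bb S$ for classical symmetric spectra. If you carry out the sanity check you yourself suggest --- testing on bigraded homotopy sheaves with $\gmp=S^{1,1}$ --- you find $\pi_{p,q}(Y\wedge\gmp^{\wedge n})=\pi_{p-n,q-n}(Y)$, whereas the $F_n\dashv\ev_n$ adjunction forces $\pi_{p,q}(F_nY)=\pi_{p+n,q+n}(Y)$; these disagree. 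Concretely for $\cc E=\one$ with $n=1$ your formula would assert $\pi^{\bb A^1}_0\one(1)\cong\pi^{\bb A^1}_{1,1}\one\cong K^{MW}_{-1}$, whereas $\one(1)$ represents $\one\wedge\gmp$ and has $\pi^{\bb A^1}_0\one(1)\cong\pi^{\bb A^1}_{-1,-1}\one\cong K^{MW}_1$.

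Plugging in the correct shift, the $F_n$-adjunction gives $\pi^{\bb A^1}_{p,n}\cc E\cong\pi^{\bb A^1}_{p-n}\cc E(|n|)$ for $n\leq 0$ (there is no weight $-n$ for $n>0$, so $F_{-n}$ is simply unavailable there). This is exactly the paper's first formula and, by itself, already gives the ``only if'' direction (each $\cc E(m)$ connective). For $n>0$ one has to argue differently: write $U_+\wedge S^{p-n}\wedge\gmp^{\wedge n}$ as $F_0(U_+\wedge S^{p-n}\wedge\gmp^{\wedge n})$ to obtain $\pi^{\bb A^1}_{p,n}\cc E\cong\pi^{\nis}_{p-n}\Omega^n_{\gmp}\cc E(0)$, and then invoke the contraction identification $\pi^{\nis}_{j}\Omega^n_{\gmp}\cc E(0)=(\pi^{\nis}_{j}\cc E(0))_{-n}$ --- the sublemma from \cite[Section~12]{GP1} --- to see that this vanishes for $j<0$ once $\cc E(0)$ is connective. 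Your write-up skips this input entirely because the erroneous formula made it look unnecessary.

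A related gap: the definition of connectivity for the bispectrum quantifies over \emph{all} $n\in\bb Z$, including $n<0$, but your final change of variables only controls $\pi^{\bb A^1}_{p,n}\cc E$ for $n\geq 0$. Even if your weight-$n$ formula were correct, that alone would not yield the vanishing required for negative $n$. Once the sign is fixed, the two weight ranges $n\leq 0$ and $n>0$ genuinely behave differently, and the argument becomes essentially the paper's: the non-positive weights give the connectivity of each $\cc E(m)$ directly, while the positive weights require the contraction functor.
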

\begin{proof}
Without loss of generality we may assume that the underlying motivic bispectrum $\cc E$ is fibrant
(we use here~\cite[Lemma~2.6]{GP5}).
Writing $|-|$ for the absolute value we have $\pi_{p,n}^{\bb A^1}\cc E=\pi_{p-n}^{\nis}\cc E(|n|)$ if $n\leq 0$,
while $\pi_{p,n}^{\bb A^1}\cc E=\pi_{p-n}^{\nis}\Omega_{\bb G^{\wedge n}}\cc E(0)$ if $n>0$.
Here $\pi_{\ast}^{\nis}$ denotes the Nisnevich sheaf associated to $\pi_{\ast}$.
The proof of the sublemma in~\cite[Section~12]{GP1} shows that
$$
\pi_{p-n}^{\nis}\Omega_{\bb G^{\wedge n}}\cc E(0)
=
\pi_{p-n}^{\nis}\cc E(0)_{-n}.
$$
If $\cc E$ is connective then $\pi_{p-n}^{\nis}\cc E(|n|)=0$ for all $n\leq 0$ and $p<n$.
In particular, for all $s>0$ and $n\leq 0$, the sheaf $\pi_{-s}^{\nis}\cc E(|n|)$ is trivial.
The converse implication is evident.
\end{proof}

Recall that $\Sp_{S^{1}}(k)$ is naturally enriched in  $\cc M$
--- see the proof of~\cite[Theorem 6.3]{H}.
In fact,
for $\cc E,\cc F\in\Sp_{S^{1}}(k)$ one defines $\cc M(\cc E,\cc F)$ as the equalizer of the diagram
\begin{equation}
\label{equation:equalizer}
\prod_n\cc M(\cc E_n,\cc F_n)
\Longrightarrow
\prod_n\cc M(\cc E_n,\uhom_{\cc M}(S^{1},\cc F_{n+1})).
\end{equation}
Here we employ the morphism $\cc M(\cc E_n,\cc F_n)\longrightarrow\cc M(\cc E_n,\uhom_{\cc M}(S^{1},\cc F_{n+1}))$ induced by the
adjoint of the structure maps of $\cc F$, and the canonically induced morphism
$$
\cc M(\cc E_{n+1},\cc F_{n+1})
\longrightarrow
\cc M(\cc E_n\wedge S^{1},\cc F_{n+1})
\cong
\cc M(\cc E_n,\uhom_{\cc M}(S^{1},\cc F_{n+1})).
$$

We shall refer to $\Sp_{S^{1}}([\mathrm{Sm}/k_{+},\cc M])$ as the {\it category of spectral functors\/}
---
see~\cite[Section~5]{GP5}.
The objects are $S^{1}$-spectra in the closed symmetric monoidal $\cc M$-category $[\mathrm{Sm}/k_{+},\cc M]$ introduced in Example \ref{exgammaframed}.
Similarly to \eqref{equation:Gevaluation},
see \cite[Section~5, (3)]{GP5},
there exists an evaluation functor
$$
\ev_{\bb G}
\colon
\Sp_{S^{1}}([\mathrm{Sm}/k_{+},\cc M])
\longrightarrow
\Sp_{S^{1},\bb G}(k).$$

We are now ready to prove Theorem~\ref{theorem:firstmain}.

\begin{proof}[Proof of Theorem~\ref{theorem:firstmain}]
For $\cc X\in \Ho_{\Gamma\M}^{\fr}(k)$ and $n\geq 0$,
see Definition \ref{fmgshc},
the geometric realization functor furnishes the associated $\cc M$-enriched functor
$$
\cc X(\gmpn)
\colon=
\vert
l
\longmapsto
\cc X(-,(\bb G^{\wedge n})_l)
\vert
\in
[\Gamma^{\op},\cc M^{\fr}].
$$
Owing to Example \ref{firstgammaframed} this is a pointed functor from $\Gamma^{\op}$ to $\cc M^{\fr}$.
Applying the functor $\ev_{S^{1}}$ in \eqref{equation:S1evaluation} yields the motivic $S^{1}$-spectrum $\ev_{S^{1}}(\cc X(\gmpn))=\cc X(\bb S,\gmpn)$.
By~\cite[Lemma~2.5]{GP5} $\cc X(\bb S,\gmpn)$ is $\bb A^1$-local. Moreover,
$\cc X(\bb S,\gmpn)$ is sectionwise connective because on every section it is the $S^{1}$-spectrum associated to a $\Gamma$-space.
It follows that $\cc X(\bb S,\gmpn)$ is a connective motivic $S^{1}$-spectrum for every $n\geq 0$.
For the evaluation functor $\ev_{S^{1},\gmp}$ in \eqref{equation:S1Gevaluation} we have
$$
\ev_{S^{1},\gmp}(\cc X)(n)
=
\cc X(\bb S,\gmpn).
$$
Combined with Lemma~\ref{connective} we conclude that $\ev_{S^{1},\gmp}(\cc X)\in \SH(k)_{\geq 0}$.
Therefore we obtain the induced evaluation functor
\begin{equation}
\label{equation:evaluation15}
\ev_{S^{1},\gmp}
\colon
\Ho_{\Gamma\M}^{\fr}(k)
\longrightarrow
\SH(k)_{\geq 0}.
\end{equation}
By the construction of $\Ho_{\Gamma\M}^{\fr}(k)$
--- see Definition \ref{fmgshc} ---
the functor $\ev_{S^{1},\gmp}$ in \eqref{equation:evaluation15} is fully faithful.
It remains to show essential surjectivity
--- this is the most interesting part of the proof.
\vspace{0.1in}

Suppose $\cc E$ is a cofibrant and fibrant symmetric motivic $(S^{1},\gmp)$-bispectrum.
Then there exists a framed spectral functor $\cc M^{\cc E}_{\fr}$ in the sense of~\cite[Definition~6.1]{GP5}
such that $\ev_{\bb G}(\cc M_{\fr}^{\cc E})$ is naturally isomorphic to $\cc E$ in $\SH(k)$
--- see~\cite[Section~6]{GP5};
in fact,
$\cc M^{\cc E}_{\fr}$ enables the equivalence between $\SH(k)$ and framed spectral functors in \cite[Theorem 6.3, Definition~6.5]{GP5}.
\vspace{0.1in}

We briefly recall the construction of $\cc M_{\fr}^{\cc E}$ since it is important for the details of this proof.
The motivic spaces $C_{\ast}\mathrm{Fr}(\cc E_{i,j})$ conspire into a motivic $(S^{1},\gmp)$-bispectrum $C_{\ast}\mathrm{Fr}(\cc E)$.
For $n\geq 0$ we let $R^n_{\gmp} C_{\ast}\mathrm{Fr}(\cc E)$ denote $\uhom(\gmpn,C_{\ast}\mathrm{Fr}(\cc E[n]))$,
where $\cc E[n]$ is the $n$th shift of $\cc E$ in the $\gmp$-direction.
In weight $i\geq 0$ we have the motivic $S^1$-spectrum
$$
R^n_{\gmp} C_{\ast}\mathrm{Fr}(\cc E)(i)
=
\uhom(\gmpn,C_{\ast}\mathrm{Fr}(\cc E(n+i))).
$$
There is a canonical morphism of motivic $(S^{1},\gmp)$-bispectra
$$
R^n_{\gmp} C_{\ast}\mathrm{Fr}(\cc E)
\longrightarrow
R^{n+1}_{\gmp} C_{\ast}\mathrm{Fr}(\cc E),
$$
and we set
$$
R^\infty_{\gmp} C_{\ast}\mathrm{Fr}(\cc E)
:=
\colim(C_{\ast}\mathrm{Fr}(\cc E)\longrightarrow R^1_{\gmp} C_{\ast}\mathrm{Fr}(\cc E)\longrightarrow R^2_{\gmp} C_{\ast}\mathrm{Fr}(\cc E)\longrightarrow\cdots).
$$
Owing to \cite[Claim~2, Section~6]{GP5} there are stable motivic equivalences
$$
\cc E
\longrightarrow
C_{\ast}\mathrm{Fr}(\cc E)
\longrightarrow
R^\infty_{\gmp} C_{\ast}\mathrm{Fr}(\cc E).
$$
For $n\geq 0$ we define the spectral functor $\bb GC_{\ast}\mathrm{Fr}^{\cc E}[n]$ sectionwise by
$$
U
\longmapsto
\uhom(\gmpn,C_{\ast}\mathrm{Fr}(\cc E(n)\wedge U_+)).
$$
By construction there is a natural morphism of spectral functors
$$
\bb GC_{\ast}\mathrm{Fr}^{\cc E}[n]\longrightarrow \bb GC_{\ast}\mathrm{Fr}^{\cc E}[n+1],
$$
and we set
$$
\cc M_{\fr}^{\cc E}
:=
\colim(\bb GC_{\ast}\mathrm{Fr}^{\cc E}[0]\longrightarrow\bb GC_{\ast}\mathrm{Fr}^{\cc E}[1]\longrightarrow\cdots).
$$

By~\cite[Lemma~6.6]{GP5} there is a morphism of motivic $(S^{1},\gmp)$-bispectra
\begin{equation}
\label{equation:refdufuhfbhfj}
\ev_{\bb G}(\cc M_{\fr}^{\cc E})\longrightarrow R^\infty_{\gmp}C_{\ast}\mathrm{Fr}(E^c).
\end{equation}
In every weight,
\eqref{equation:refdufuhfbhfj} is a stable local equivalence of motivic $S^1$-spectra due to~\cite[Lemma~6.7]{GP5}.
This implies the zigzag of stable motivic equivalences
$$
\cc E
\longrightarrow
R^\infty_{\gmp} C_{\ast}\mathrm{Fr}(\cc E)
\longleftarrow
\ev_{\bb G}(\cc M_{\fr}^{\cc E}),
$$
and therefore an isomorphism in $\SH(k)$
\begin{equation}
\label{equation:iptgbbgibfgfkkh}
\ev_{\bb G}(\cc M_{\fr}^{\cc E})
\cong
\cc E.
\end{equation}

For $U\in\mathrm{Sm}/k_{+}$ the motivic $S^{1}$-spectrum $\cc M^{\cc E}_{\fr}(U)$ is not necessarily a sectionwise $\Omega$-spectrum.
However,
the said property holds for the framed spectral functor ${\bb M}^{\cc E}_{\fr}$ with sections
$$
U
\longmapsto
\Theta^{\infty}_{S^{1}}\cc M^{\cc E}_{\fr}(U).
$$
Here $\Theta^{\infty}_{S^{1}}$ is the motivic $S^{1}$-stabilization functor defined in \cite[Definition 4.2]{H}.
By construction there is a canonical morphism
\begin{equation}
\label{equation:morphismfsf}
{\cc M}^{\cc E}_{\fr}(U)
\longrightarrow
{\bb M}^{\cc E}_{\fr}(U).
\end{equation}
We note that \eqref{equation:morphismfsf} is a sectionwise stable equivalence of motivic $S^{1}$-spectra.
\vspace{0.1in}

Next we use \eqref{equation:equalizer} to define the motivic $\Gamma$-space $\Gamma\bb M_{\fr}^{\cc E}$ by setting
$$
\Gamma\bb M_{\fr}^{\cc E}(n_{+},U)
:=
\cc M(\bb S^{\times n},\bb M_{\fr}^{\cc E}(U)),\quad n\geq 0,\ U\in\mathrm{Sm}/k.
$$
Here the $S^{1}$-spectrum $\bb S^{\times n}:=\bb S\times\bl{n}\cdots\times\bb S$ is regarded as a constant motivic $S^{1}$-spectrum.
For all $U,V\in\mathrm{Sm}/k_{+}$ and the adjunction $(\ev_{S^1},\Phi)$ between $\Gamma$-spaces and spectra in~\cite[Section 5]{BFgamma} we have
\begin{equation}
\label{kucherov}
\Gamma\bb M_{\fr}^{\cc E}(n_{+},U)(V)
=
\Phi(\bb M_{\fr}^{\cc E}(U)(V))(n_{+})
=
{\bf S}_{\bullet}(\bb S^{\times n},\bb M_{\fr}^{\cc E}(U)(V)).
\end{equation}
This expression determines the values of $\Phi$ at the $S^{1}$-spectrum $\bb M_{\fr}^{\cc E}(U)(V)$.
Moreover,
the counit $\ev_{S^1}\circ\Phi\longrightarrow\id$ induces a morphism of spectral functors
\begin{equation}\label{tikhonov}
\ev_{S^1}(\Gamma\bb M_{\fr}^{\cc E})\longrightarrow\bb M_{\fr}^{\cc E}.
\end{equation}

By construction,
$\Gamma\bb M_{\fr}^{\cc E}$ is a framed motivic $\Gamma$-space in the sense of Definition~\ref{dfn:gammasp}.
%Moreover,
%in weight $n\geq 0$,
%\eqref{tikhonov} induces a stable local equivalence of motivic $S^{1}$-spectra
%$$
%\ev_{S^1,\gmp}(\Gamma\bb M^{\cc E}_{\fr})(n)
%\longrightarrow
%\ev_{\gmp}(\bb M^{\cc E}_{\fr})(n).
%$$
%In combination with \eqref{equation:iptgbbgibfgfkkh} we deduce an isomorphism in $\SH(k)$
%\begin{equation}
%\label{equation:agfdtsc}
%\ev_{S^1,\gmp}(\Gamma\bb M^{\cc E}_{\fr})\cong\cc E.
%\end{equation}
Moreover,
in weight $n\geq 0$,
\eqref{equation:morphismfsf} induces a sectionwise stable equivalence of motivic $S^{1}$-spectra
$$
\ev_{\gmp}(\cc M^{\cc E}_{\fr})(n)
\longrightarrow
\ev_{\gmp}(\bb M^{\cc E}_{\fr})(n).
$$
In combination with \eqref{equation:iptgbbgibfgfkkh} we deduce an isomorphism in $\SH(k)$
\begin{equation}
\label{equation:agfdtsc}
\ev_{\gmp}(\bb M^{\cc E}_{\fr})
\cong
\cc E.
\end{equation}
We will show that $\Gamma\bb M_{\fr}^{\cc E}$ satisfies (1)-(4) in Axioms~\ref{axioms:motivicgammaspaces} and also~(5) provided $\cc E\in \SH(k)_{\geq 0}$.
\vspace{0.1in}

Clearly we have $\Gamma\bb M_{\fr}^{\cc E}(0_{+},U)=\ast=\Gamma\bb M_{\fr}^{\cc E}(n_{+},\emptyset)$
for all $U\in\mathrm{Sm}/k_{+}$ and $n\geq 0$.
Moreover,
the canonical sectionwise stable equivalence of cofibrant motivic $S^{1}$-spectra
$$
\bb S\vee\bl n\cdots\vee\bb S
\longrightarrow
\bb S\times\bl n\cdots\times\bb S
$$
induces
--- via \eqref{equation:equalizer} and \eqref{kucherov} ---
the sectionwise equivalence of motivic spaces
$$
\Gamma\bb M_{\fr}^{\cc E}(n_{+},U)
=
\cc M(\bb S^{\times n},\bb M_{\fr}^{\cc E}(U))
\longrightarrow
\cc M(\bb S^{\vee n},\bb M_{\fr}^{\cc E}(U))
\cong
\Gamma\bb M_{\fr}^{\cc E}(1_{+},U)^{\times n}.
$$
This establishes Axiom~(1).
\vspace{0.1in}

For $U\in\mathrm{Sm}/k_{+}$ the presheaf of stable homotopy groups $\pi_{n}\ev_{S^{1}}({\Gamma\bb M}^{\cc E}_{\fr}(U))$
is isomorphic to $\pi_{n}({\bb M}^{\cc E}_{\fr}(U))$ if $n\geq 0$ and trivial if $n<0$
--- this follows as in~\cite[Theorem~5.1]{BFgamma}.
By \eqref{equation:morphismfsf} there is an isomorphism of presheaves between
$\pi_{\ast}({\cc M}^{\cc E}_{\fr}(U))$ and $\pi_{\ast}({\bb M}^{\cc E}_{\fr}(U))$.
Since the former is framed in addition to being $\bb A^1$- and $\sigma$-invariant,
the same holds for $\pi_{\ast}\ev_{S^{1}}({\Gamma\bb M}^{\cc E}_{\fr}(U))$.
This shows that Axiom~(2) holds.
\vspace{0.1in}

Axioms~(3) and~(4) hold because ${\bb M}^{\cc E}_{\fr}$ is a framed spectral functor and the presheaves of stable homotopy groups $\pi_n\ev_{S^{1}}({\Gamma\bb M}^{\cc E}_{\fr}(U))$
of the connective $\bb A^1$-local motivic $S^{1}$-spectrum $\ev_{S^{1}}({\Gamma\bb M}^{\cc E}_{\fr}(U))$
are isomorphic to $\pi_n({\bb M}^{\cc E}_{\fr}(U))$ for all $n\geq 0$
and $U\in\mathrm{Sm}/k_{+}$.
\vspace{0.1in}

Axiom~(5) holds if we assume $\cc E\in \SH(k)_{\geq 0}$.
Indeed,
the proof of~\cite[Theorem 6.3]{GP5} shows $\cc E\wedge U_{+}\in \SH(k)_{\geq 0}$ is isomorphic to $\ev_{\bb G}(\bb M^{\cc E}_{\fr}(-\times U))$ for all $U\in\mathrm{Sm}/k_{+}$.
Here $\bb M^{\cc E}_{\fr}(-\times U)$ is the framed spectral functor with sections
$$
X
\longmapsto
\bb M^{\cc E}_{\fr}(X\times U).
$$
By Lemma~\ref{connective} the $\bb A^1$-local motivic $S^{1}$-spectrum $\bb M^{\cc E}_{\fr}(U)$ is connective.
Indeed,
$\bb M^{\cc E}_{\fr}(U)$ is the zeroth weight of the framed bispectrum $\ev_{\bb G}(\bb M^{\cc E}_{\fr}(-\times U))$ whose weights are $\bb A^1$-local by~\cite[Lemma~2.6]{GP5}.
Thus for all $U\in\mathrm{Sm}/k_{+}$ the morphism~\eqref{tikhonov} yields a stable local equivalence of connective motivic $S^{1}$-spectra
$$
\Gamma\bb M^{\cc E}_{\fr}(\bb S,U)
\longrightarrow
\bb M^{\cc E}_{\fr}(U).
$$
We conclude $\Gamma\bb M^{\cc E}_{\fr}(\bb S,-)$ is a framed spectral functor and the framed motivic $\Gamma$-space $\Gamma\bb M^{\cc E}_{\fr}$ satisfies Nisnevich excision as in Axiom~(5).
This completes the proof.
\end{proof}

\begin{rem}
\label{sergachev}
The proof of Theorem~\ref{theorem:firstmain} shows that a quasi-inverse functor $\Gamma\bb M_{\fr}$ to the equivalence
$\ev_{S^{1},\gmp}:\Ho_{\Gamma\M}^{\fr}(k){\longrightarrow}\SH(k)_{\geq 0}$ is given as follows:
For $\cc E\in\SH(k)_{\geq 0}$ take a functorial cofibrant and fibrant replacement $\cc E^{\prime}$ in the stable model structure on symmetric motivic $(S^{1},\bb G)$-bispectra.
Then map $\cc E$ to the framed motivic $\Gamma$-space $\Gamma\bb M_{\fr}^{\cc E^{\prime}}$.
\end{rem}

With Theorem~\ref{theorem:firstmain} in hand we can prove Theorem~\ref{theorem:secondmain}.

\begin{proof}[Proof of Theorem~\ref{theorem:secondmain}]
Following \cite[Section 3, p.~1131]{Bachmann}, \cite[Section 5]{SO} we have
$$
\SH^{\veff}(k)=\SH(k)_{\geq 0}\cap \SH^{\eff}(k),
$$
where $\SH^{\eff}(k)$ is the full subcategory of $\SH(k)$ comprised of effective bispectra.
For $\cc X\in \Ho_{\Gamma\M}^{\veffr}(k)$ the evaluation $\cc X_{S^{1},\bb G}$ is contained in $\SH(k)_{\geq 0}$ due to Theorem~\ref{theorem:firstmain}.
By Axiom (6) the $S^{1}$-spectrum
$$
|\cc X(\bb S,\gmp\times U)(\wh{\Delta}^\bullet_{K/k})|
$$
is stably contractible for any finitely generated field extension $K/k$ and $U\in\mathrm{Sm}/k$.
It follows that
$$
|\cc X(\bb S,\gmpn)(\wh{\Delta}^\bullet_{K/k})|
$$
is stably contractible for every $n>0$.
We deduce that $\cc X_{S^{1},\bb G}\in\SH^{\eff}(k)$ and thus $\cc X_{S^{1},\bb G}\in\SH^{\veff}(k)$ by~reference to \cite[Theorem 4.4]{BF} and~\cite[Definition 3.5, Theorem 3.6]{GP5}.
\vspace{0.1in}

We have shown the restriction of the equivalence $\ev_{S^{1},\gmp}:\Ho_{\Gamma\M}^{\fr}(k)\lra{\simeq}\SH(k)_{\geq 0}$ in Theorem~\ref{theorem:firstmain} to the full subcategory
$\Ho_{\Gamma\M}^{\veffr}(k)$ takes values in $\SH^{\veff}(k)$.
It remains to show that it is essentially surjective.
\vspace{0.1in}

Suppose $\cc E$ is a very effective cofibrant and fibrant symmetric motivic $(S^{1},\bb G)$-bispectrum.
By Theorem~\ref{theorem:firstmain} there exists a framed motivic $\Gamma$-space $\Gamma\bb M^{\cc E}_{\fr}$ and an isomorphism between $\ev_{S^{1},\gmp}(\Gamma\bb M^{\cc E}_{\fr})$
and $\cc E$ in $\SH(k)_{\geq 0}$.
Moreover,
the proof of Theorem~\ref{theorem:firstmain} shows that for every $U\in\mathrm{Sm}/k_{+}$ there is an isomorphism in $\SH(k)_{\geq 0}$ between $\cc E\wedge U_{+}$ and
$\ev_{S^{1},\gmp}(\Gamma\bb M^{\cc E}_{\fr}(-\times U))$.
Here $\Gamma\bb M^{\cc E}_{\fr}(-\times U)$ is the framed motivic $\Gamma$-space with sections
$$
(n_{+},X)
\longmapsto
\Gamma\bb M^{\cc E}_{\fr}(n_{+},X\times U).
$$
Recall that $\SH^{\veff}(k)$ is closed under the smash product in $\SH(k)$ by \cite[Lemma 5.6]{SO}.
In particular we have $\cc E\wedge U_{+}\in \SH^{\veff}(k)$.
To conclude the $S^{1}$-spectrum
$$
|\Gamma\bb M^{\cc E}_{\fr}(\bb S,\gmp\times U)(\wh{\Delta}^\bullet_{K/k})|
$$
is stably contractible we appeal to~\cite[Theorem~3.6]{GP5}.
It follows that the framed motivic $\Gamma$-space $\Gamma\bb M^{\cc E}_{\fr}$ is effective,
and hence $\cc E$ is isomorphic to $\ev_{S^{1},\gmp}(\Gamma\bb M^{\cc E}_{\fr})$ in $\SH^{\veff}(k)$.
\end{proof}

Suppose $\cc E$ is a motivic $(S^{1},\bb G)$-bispectrum with a motivic fibrant replacement $\cc E^{f}$.
We will write $\Omega_{S^{1}}^{\infty}\Omega_{\bb G}^{\infty} \cc E$ for the pointed motivic space $\cc E^{f}_{0,0}$.

\begin{dfn}
\label{zadorov}
A pointed motivic space $A$ is an {\it infinite motivic loop space\/} if there exists a motivic $(S^{1},\bb G)$-bispectrum $\cc E$
and a local equivalence $A\simeq\Omega_{S^{1}}^{\infty}\Omega_{\bb G}^{\infty} \cc E$.
\end{dfn}

\begin{lem}
\label{ove}
Suppose $\cc X$ is a very special framed motivic $\Gamma$-space.
Then the bispectrum ${\cc X}^{f}_{S^{1},\gmp}$ obtained from ${\cc X}_{S^{1},\gmp}$ by taking levelwise local fibrant replacements is motivically fibrant.
\end{lem}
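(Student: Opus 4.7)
The plan is to verify motivic fibrancy of the levelwise Nisnevich-locally fibrant bispectrum ${\cc X}^{f}_{S^1,\gmp}$ by checking two conditions: (a) each level $\cc X(S^i,\bb G^{\wedge j})^{f}$ is motivically fibrant as a pointed motivic space, and (b) both adjoint structure maps
$$
\cc X(S^i,\bb G^{\wedge j})^{f} \longrightarrow \Omega_{S^1}\cc X(S^{i+1},\bb G^{\wedge j})^{f}, \qquad \cc X(S^i,\bb G^{\wedge j})^{f} \longrightarrow \Omega_{\gmp}\cc X(S^i,\bb G^{\wedge j+1})^{f}
$$
are sectionwise weak equivalences of motivic spaces.

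For condition (b) in the $S^1$-direction, I would observe that for each fixed $j\geq 0$ the $\Gamma$-space $\cc X(-,\bb G^{\wedge j})$ is sectionwise very special on $\mathrm{Sm}/k$: the Segal condition Axiom~(1) gives the product decomposition and the grouplikeness Axiom~(7) supplies the group completion condition on $\pi_0$. The classical Segal-Bousfield-Friedlander theorem then yields, sectionwise, that $\cc X(\bb S,\bb G^{\wedge j})$ is an $\Omega$-spectrum -- a property preserved by levelwise local fibrant replacement. For the $\gmp$-direction, the cancellation Axiom~(3) with $U=\pt$ supplies a stable local equivalence
$$
\cc X(\bb S,\bb G^{\wedge j}) \longrightarrow \uhom(\gmp,\cc X(\bb S,\bb G^{\wedge j+1}))
$$
of $S^1$-spectra. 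Since, after levelwise local fibrant replacement, both sides are $\Omega$-spectra (by the previous paragraph applied to each weight), a stable local equivalence upgrades to a levelwise sectionwise equivalence of motivic spaces, delivering the $\gmp$-$\Omega$-condition.

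For condition (a), Nisnevich-local fibrancy is automatic. The remaining property is $\bb A^1$-locality of each level. I would combine Axiom~(2) (the framed presheaves $\pi^s_n\cc X(\bb S,\bb G^{\wedge j})$ are $\bb A^1$-invariant and $\sigma$-stable) with Nisnevich excision Axiom~(5) and the strict $\bb A^1$-invariance theorem for framed presheaves from~\cite{GP1}. This yields that the sheafified stable homotopy groups of each $\cc X(\bb S,\bb G^{\wedge j})$ are strictly $\bb A^1$-invariant. An $\Omega$-spectrum with Nisnevich-locally fibrant levels and strictly $\bb A^1$-invariant homotopy sheaves has $\bb A^1$-local levels, whence each $\cc X(S^i,\bb G^{\wedge j})^{f}$ is motivically fibrant.

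The main obstacle is step~(a), specifically bridging $\bb A^1$-invariance of stable homotopy sheaves to pointwise $\bb A^1$-locality of individual levels. This requires invoking the strict $\bb A^1$-invariance theorem for framed presheaves -- a central result of the framed motives machinery of~\cite{GP1} -- together with care at $\pi_0$, where grouplikeness Axiom~(7) is precisely what ensures one is dealing with sheaves of abelian groups rather than only commutative monoids.
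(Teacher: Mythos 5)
Your proposal is correct and follows essentially the same strategy as the paper: the paper's proof is just two lines, invoking \cite[Lemma 2.6]{GP5} (which packages exactly the content of your steps (a) and (b) in the $\gmp$-direction, using the framedness, cancellation, $\bb A^1$-invariance, and Nisnevich excision axioms to get motivic fibrancy in the $\bb G$-weight once the $S^1$-levels are known to form $\Omega$-spectra) and \cite[Corollary 2.2.1.7]{DGM} for the very-special-$\Gamma$-space-gives-$\Omega$-spectrum fact, which is your $S^1$-direction of step (b). You have unpacked what the cited lemma does rather than citing it as a black box, and the unpacking is accurate -- including the remark that grouplikeness Axiom (7) is what guarantees an $\Omega$-spectrum from level zero rather than level one. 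One small point worth flagging: Axiom (2) as stated applies to $U\in\mathrm{Sm}/k_+$, whereas you apply it to $\bb G^{\wedge j}$, a simplicial object of $\Delta^{\op}\mathrm{Sm}/k_+$; this requires passing through geometric realization, which is routine but should be noted.
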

\begin{proof}
This follows from~\cite[Lemma 2.6]{GP5} since the $S^{1}$-spectrum associated with a very special $\Gamma$-space is an $\Omega$-spectrum after taking levelwise fibrant replacements \cite[Corollary 2.2.1.7]{DGM}.
\end{proof}

The above brings us to the proof of Theorem~\ref{theorem:thirdmain}.

\begin{proof}[Proof of Theorem~\ref{theorem:thirdmain}]
Without loss of generality we may assume $\cc E\in\SH(k)_{\geq 0}$.
Indeed, it follows from~\cite[p.~374]{ALP} that
for any $\cc E$ the connective cover $\tau_{\geq 0}\cc E\longrightarrow\cc E$ yields a sectionwise equivalence
$$
\Omega^\infty_{S^1}\Omega^\infty_{\bb G}(\tau_{\geq 0}\cc E)
\longrightarrow
\Omega^\infty_{S^1}\Omega^\infty_{\bb G}(\cc E).
$$
Now every $\cc E\in \SH(k)_{\geq 0}$ is isomorphic to $\ev_{S^{1},\gmp}(\Gamma\bb M_{\fr}^{\cc E})$ for some special framed motivic $\Gamma$-space $\Gamma\bb M_{\fr}^{\cc E}$
--- see the proof of Theorem~\ref{theorem:firstmain}.
For $n\geq 0$ and $U,V\in\mathrm{Sm}/k_{+}$,
item \eqref{kucherov} yields
$$
\Gamma\bb M_{\fr}^{\cc E}(n_{+},U)(V)
=
\Phi(\bb M_{\fr}^{\cc E}(U)(V))(n_{+})
=
{\bf S}_{\bullet}(\bb S^{\times n},\bb M_{\fr}^{\cc E}(U)(V)).
$$
Here $\bb M_{\fr}^{\cc E}(U)(V)$ is the $\Omega$-spectrum $\Theta^{\infty}_{S^{1}}\cc M_{\fr}^{\cc E}(U)(V)$ introduced in the proof of Theorem~\ref{theorem:firstmain}.
It follows that $\Gamma\bb M_{\fr}^{\cc E}(1_{+},U)(V)$ is the zeroth space ${\bb M}_{\fr}^{\cc E}(U)(V)_0$ of the $\Omega$-spectrum ${\bb M}_{\fr}^{\cc E}(U)(V)$.
Thus $\pi_0\bb M_{\fr}^{\cc E}(U)(V)_0$ is an abelian group,
and $\pi_0^{\nis}\Gamma\bb M_{\fr}^{\cc E}(1_{+},U)$ is a Nisnevich sheaf of abelian groups.
This shows that $\Gamma\bb M_{\fr}^{\cc E}$ is a very special framed motivic $\Gamma$-space --- see Axiom (7).
\vspace{0.1in}

By appeal to Lemma~\ref{ove} the bispectrum $\ev_{S^{1},\gmp}(\Gamma\bb M_{\fr}^{\cc E})^f$ obtained by taking levelwise local fibrant replacements is motivically fibrant.
Hence there exists a sectionwise equivalence of pointed motivic spaces
$$
\Gamma\bb M_{\fr}^{\cc E}(1_{+},\pt)^f
=
\ev_{S^{1},\gmp}(\Gamma\bb M_{\fr}^{\cc E})^f_{0,0}
\simeq
\Omega^{\infty}_{S^{1}}\Omega^{\infty}_{\bb G}\cc E.
$$
We conclude that $\Gamma\bb M_{\fr}^{\cc E}(1_{+},\pt)$ is locally
equivalent to $\Omega^{\infty}_{S^{1}}\Omega^{\infty}_{\bb G}\cc E$.
\vspace{0.1in}

Now suppose $\cc X$ is a very special framed motivic $\Gamma$-space.
By Lemma~\ref{ove}, ${\cc X}^{f}_{S^{1},\gmp}$ is motivically fibrant and we deduce
$$
\cc X(1_{+},\pt)^f
=
\ev_{S^{1},\gmp}(\cc X)^f_{0,0}
\simeq
\Omega^{\infty}_{S^{1}}\Omega^{\infty}_{\bb G}{\cc X}^{f}_{S^{1},\gmp}.
$$
Since $\cc X(1_{+},\pt)$ is locally equivalent to $\cc X(1_{+},\pt)^f$ it follows that $\cc X(1_{+},\pt)$ is an infinite motivic loop space in the sense of Definition~\ref{zadorov}.
\end{proof}

\begin{rem}
\label{gavrikov}
Every special framed motivic $\Gamma$-space $\cc X:\Gamma^{\op}\boxtimes\mathrm{Sm}/k_{+}\longrightarrow\cc M$ has a canonically associated very special framed motivic $\Gamma$-space with sections
$$
(n_{+},U)
\longmapsto
\Omega_{S^{1}}\mathrm{Ex}^{\infty}\cc X(S^{1}\wedge n_{+},U).
$$
In this expression, Kan's fibrant replacement functor $\mathrm{Ex}^{\infty}$ is applied sectionwise in ${\bf S}_{\bullet}$.
\end{rem}

We finish this section by discussing the diagram \eqref{equation:jeytrfgvjc} of adjoint functors from the introduction:
\begin{equation*}
\xymatrix{
\Ho(k)
\ar@/^/[rr]^-{\Sigma^{\infty}_{S^{1},\bb G}} \ar@/_/[dr]_-{C_{\ast}\mathrm{Fr}}
&&
\SH(k)_{\geq 0} \ar@/^/[dl]^-{\Gamma\bb M_{\fr}} \ar@/^/[ll]^-{\Omega^{\infty}_{S^{1},\bb G}}
\\
& \Ho_{\Gamma\M}^{\fr}(k) \ar@/^/[ur] \ar@/_/[ul]
}
\end{equation*}
The functor $u\colon \Ho_{\Gamma\M}^{\fr}(k)\longrightarrow\Ho(k)$ sends a framed motivic $\Gamma$-space $\cc X$ to its underlying motivic space $\cc X(1_{+},\pt)$.
Moreover,
$C_{\ast}\mathrm{Fr}$ sends a motivic space $A$ to $C_{\ast}\mathrm{Fr}(A^c\otimes-)$
--- the projective cofibrant replacement $A^c$ of $A$ is a filtered colimit of simplicial smooth schemes from $\Delta^{\op}\mathrm{Sm}/k_{+}$.
According to~\cite[Section~11]{GP1} $C_{\ast}\mathrm{Fr}$ is a functor from $\Ho(k)$ to $\Ho_{\Gamma\M}^{\fr}(k)$.
\vspace{0.1in}

The composite functor $\ev_{S^{1},\gmp}\circ C_{\ast}\mathrm{Fr}$ is equivalent to $\Sigma^{\infty}_{S^{1},\bb G}$ due to~\cite[Section~11]{GP1}.
Theorem~\ref{theorem:thirdmain} implies that $u\circ\Gamma\bb M_{\fr}$ is equivalent to $\Omega^{\infty}_{S^{1},\bb G}$.
Thus the adjoint pair $(\Sigma^{\infty}_{S^{1},\bb G},\Omega^{\infty}_{S^{1},\bb G})$ is
equivalent to $(\ev_{S^{1},\gmp}\circ C_{\ast}\mathrm{Fr},u\circ\Gamma\bb M_{\fr})$.
Since $(\ev_{S^{1},\gmp},\Gamma\bb M_{\fr})$ is an adjoint equivalence by Theorem~\ref{theorem:firstmain}, it follows that
$(C_{\ast}\mathrm{Fr},u)$ is a pair of adjoint functors.

\begin{cor}
\label{vasilevski}
The diagram of adjoint functors~\eqref{equation:jeytrfgvjc} commutes up to equivalence of functors.
\end{cor}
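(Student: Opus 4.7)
The plan is to verify the two triangles in the diagram commute up to equivalence and then invoke Theorem~\ref{theorem:firstmain} to obtain the remaining adjunction. Nearly all of the substantive work has already been carried out; this corollary is a formal consolidation.

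First, I would recall the description of the three pairs of functors. The bottom-left pair $(C_{\ast}\mathrm{Fr},u)$ is defined on objects by $u(\cc X) = \cc X(1_{+},\pt)$ and $C_{\ast}\mathrm{Fr}(A) = C_{\ast}\mathrm{Fr}(A^{c}\otimes-)$, where $A^{c}$ is a projective cofibrant replacement expressible as a filtered colimit of objects in $\Delta^{\op}\mathrm{Sm}/k_{+}$; \cite[Section 11]{GP1} guarantees this is a well-defined functor landing in $\Ho_{\Gamma\M}^{\fr}(k)$. The top pair $(\Sigma^{\infty}_{S^{1},\bb G},\Omega^{\infty}_{S^{1},\bb G})$ is the usual suspension-loop adjunction (restricted to the connective part on the right). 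The bottom-right pair $(\ev_{S^{1},\gmp},\Gamma\bb M_{\fr})$ is the adjoint equivalence of Theorem~\ref{theorem:firstmain} and Remark~\ref{sergachev}.

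Second, I would verify the left triangle $\ev_{S^{1},\gmp}\circ C_{\ast}\mathrm{Fr}\simeq\Sigma^{\infty}_{S^{1},\bb G}$. This is an immediate consequence of the framed motives machinery \cite[Section~11]{GP1}; more precisely, the evaluation of the framed motivic $\Gamma$-space $C_{\ast}\mathrm{Fr}(A^{c}\otimes-)$ on $(\bb S,\gmpn)$ recovers the framed motive computation of $\Sigma^{\infty}_{S^{1},\bb G}A$ in $\SH(k)_{\geq 0}$, as used in Example~\ref{example:liuhvkbh}. Then I would verify the right triangle $u\circ\Gamma\bb M_{\fr}\simeq\Omega^{\infty}_{S^{1},\bb G}$, which is precisely the content of Theorem~\ref{theorem:thirdmain}: for $\cc E\in\SH(k)_{\geq 0}$ the local equivalence $\Gamma\bb M_{\fr}^{\cc E}(1_{+},\pt)\simeq\Omega^{\infty}_{S^{1}}\Omega^{\infty}_{\bb G}\cc E$ identifies $u\circ\Gamma\bb M_{\fr}$ with $\Omega^{\infty}_{S^{1},\bb G}$.

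Finally, I would transfer the adjunction. Since the pair $(\ev_{S^{1},\gmp},\Gamma\bb M_{\fr})$ is an adjoint equivalence by Theorem~\ref{theorem:firstmain}, and since
\[
\Sigma^{\infty}_{S^{1},\bb G}\simeq \ev_{S^{1},\gmp}\circ C_{\ast}\mathrm{Fr},\qquad \Omega^{\infty}_{S^{1},\bb G}\simeq u\circ\Gamma\bb M_{\fr},
\]
the classical adjunction $(\Sigma^{\infty}_{S^{1},\bb G},\Omega^{\infty}_{S^{1},\bb G})$ is equivalent, via conjugation by the equivalence $(\ev_{S^{1},\gmp},\Gamma\bb M_{\fr})$, to the composite pair $(C_{\ast}\mathrm{Fr},u)$. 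Therefore $(C_{\ast}\mathrm{Fr},u)$ is an adjoint pair and the entire diagram commutes up to equivalence of functors. There is no genuine obstacle here: the argument is formal once Theorems~\ref{theorem:firstmain} and~\ref{theorem:thirdmain} are in place, and the only point requiring care is ensuring that the identifications of the composite functors are natural enough to transport the unit and counit of the suspension-loop adjunction, which follows from the naturality of the equivalences produced in the proofs of those theorems.
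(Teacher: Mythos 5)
Your proposal matches the paper's own argument essentially verbatim: identify the two triangles (\(\ev_{S^{1},\gmp}\circ C_{\ast}\mathrm{Fr}\simeq\Sigma^{\infty}_{S^{1},\bb G}\) via \cite[Section~11]{GP1} and \(u\circ\Gamma\bb M_{\fr}\simeq\Omega^{\infty}_{S^{1},\bb G}\) via Theorem~\ref{theorem:thirdmain}), then transport the suspension--loop adjunction through the adjoint equivalence of Theorem~\ref{theorem:firstmain} to conclude \((C_{\ast}\mathrm{Fr},u)\) is adjoint. Nothing essential differs.
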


\section{Further properties of motivic $\Gamma$-spaces}
\label{hdtgtfe}
Let $\cc X:\Gamma^{\op}\boxtimes\mathrm{Sm}/k_{+}\longrightarrow \cc M^{\fr}$ be a framed motivic $\Gamma$-space.
One has an enriched functor
$$
\cc X(1_{+},-):
\mathrm{Sm}/k_{+}
\longrightarrow \cc M^{\fr},
\quad
U
\longmapsto
\cc X(1_{+},U).
$$
For all $U,V\in\mathrm{Sm}/k_{+}$ we have the elementary Nisnevich square:
$$
\xymatrix{
\emptyset\ar[d]\ar[r]&V\ar[d]\\
U\ar[r]&U\sqcup V
}
$$

If $\cc X$ is (very) special in the sense of Axioms \ref{axioms:motivicgammaspaces},
then Axioms~(1) and~(5) imply the stable local equivalence
\begin{equation}
\label{equation:kuywgfkv}
\cc X(\bb S,U)\vee\cc X(\bb S,V)\longrightarrow\cc X(\bb S,U\sqcup V).
\end{equation}
On the other hand,
the sectionwise stable equivalence
$$
\cc X(\bb S,U)\vee\cc X(\bb S,V)\longrightarrow\cc X(\bb S,U)\times\cc X(\bb S,V)
$$
factors as
$$
\cc X(\bb S,U)\vee\cc X(\bb S,V)\longrightarrow\cc X(\bb S,U\sqcup V)\longrightarrow\cc X(\bb S,U)\times\cc X(\bb S,V).
$$
It follows that the rightmost morphism is a local stable equivalence.
This shows the morphism of motivic spaces
$$
\cc X(1_{+},U\sqcup V)\longrightarrow\cc X(1_{+},U)\times\cc X(1_{+},V)
$$
is a local equivalence,
and likewise for
$$
\cc X(1_{+},n_{+}\otimes U)\longrightarrow\cc X(1_{+},U)\times\bl n\cdots\times\cc X(1_{+},U),\quad n\geq 1.
$$
Here we write $n_{+}\otimes U:=U\sqcup\bl n\cdots\sqcup U\in\textrm{Sm}/k_{+}$.
Axiom~(1) ensures that $\cc X(1_{+},0_{+}\otimes U)=\ast$ since by definition $0_{+}\otimes U:=\emptyset$.
Moreover,
if $\cc X$ is very special then the Nisnevich sheaf $\pi^{\nis}_{0}\cc X(1_{+},U)$ takes values in abelian groups due to Axiom~(7).
We record these observations in the next lemma.
%\vspace{0.1in}

\begin{lem}
\label{shesterkin}
For any very special framed motivic $\Gamma$-space $\cc X$ and $U\in\textrm{Sm}/k_{+}$ the functor
$$
n_{+}
\longmapsto
\cc X(1_{+},n_{+}\otimes U)
$$
is locally a very special $\Gamma$-space.
\end{lem}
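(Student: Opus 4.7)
The plan is to observe that all three defining properties of a very special $\Gamma$-space have essentially been verified in the paragraphs immediately preceding the lemma; what remains is simply to package them into a single statement about the functor $n_{+}\longmapsto\cc X(1_{+},n_{+}\otimes U)$ on $\Gamma^{\op}$. Functoriality in $n_{+}$ is automatic from composing the pairing $\otimes\colon\Gamma^{\op}\longrightarrow\mathrm{Sm}/k_{+}$, $n_{+}\longmapsto n_{+}\otimes U$, with $\cc X(1_{+},-)$, and the basepoint condition $\cc X(1_{+},0_{+}\otimes U)=\ast$ follows from Axiom~(1) together with the convention $0_{+}\otimes U=\emptyset$.

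For the Segal specialness condition I would iterate the Nisnevich excision argument already carried out above. Applying Axioms~(1) and~(5) to the elementary Nisnevich square with corners $\emptyset$, $V$, $U$, $U\sqcup V$ produces the stable local equivalence \eqref{equation:kuywgfkv}; comparing this with the sectionwise equivalence $\cc X(\bb S,U)\vee\cc X(\bb S,V)\longrightarrow\cc X(\bb S,U)\times\cc X(\bb S,V)$ forces the morphism
$$
\cc X(\bb S,U\sqcup V)\longrightarrow\cc X(\bb S,U)\times\cc X(\bb S,V)
$$
to be a stable local equivalence. Taking zeroth spaces and inducting on $n$ yields local equivalences $\cc X(1_{+},n_{+}\otimes U)\longrightarrow\cc X(1_{+},U)^{\times n}$ for every $n\geq 1$, which is precisely the Segal condition in the local model structure.

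Grouplikeness is then immediate: the preceding Segal equivalence identifies $\pi_{0}^{\nis}\cc X(1_{+},n_{+}\otimes U)\cong\pi_{0}^{\nis}\cc X(1_{+},U)^{\times n}$ as Nisnevich sheaves, and this is a product of abelian groups by Axiom~(7) applied to $\cc X$. Since no step appeals to machinery beyond what was established in the two preceding paragraphs, I do not foresee a substantive obstacle; the only subtlety worth flagging is that specialness must be read in the local (i.e., Nisnevich-sheafified) model structure rather than sectionwise, which is precisely what the qualifier ``locally'' in the statement is designed to signal.
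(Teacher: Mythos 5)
Your argument is correct and follows exactly the same route as the paper: both derive the Segal condition in the scheme variable from the stable local equivalence furnished by Axioms~(1) and~(5) applied to the degenerate Nisnevich square with corners $\emptyset$, $V$, $U$, $U\sqcup V$, then descend to level-zero spaces and read grouplikeness from Axiom~(7). The only stylistic difference is that you route grouplikeness of $\pi_0^{\nis}\cc X(1_{+},n_{+}\otimes U)$ through the Segal equivalence, whereas the paper applies Axiom~(7) to $U$ directly; and note that the passage from the stable local equivalence of $S^1$-spectra to a local equivalence of zeroth spaces silently uses that $\cc X$ is very (not merely) special, since otherwise the zeroth space is not recoverable from the associated spectrum.
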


Let us fix a cofibrant replacement functor $A\longrightarrow A^c$ in the projective motivic model structure on $\cc M$ in the sense of~\cite[Section~3]{Bl}, \cite{DRO2}
--- $A^c$ is a sequential colimit of simplicial schemes in $\Delta^{\op}\mathrm{Sm}/k_{+}$.
For a motivic $\Gamma$-space $\cc X$, we define the functor $\cc X(1_{+},-):\cc M\longrightarrow\cc M$ by setting
$$
\cc X(1_{+},A)
:=
\colim_{(\Delta[n]\times U)_{+}\longrightarrow A}\cc X(1_{+},\Delta[n]_{+}\otimes U),\quad A\in\cc M.
$$
Here we identify the pointed motivic space $A$ with $\colim_{(\Delta[n]\times U)_{+}\longrightarrow A}(\Delta[n]\times U)_+$.
\vspace{0.1in}

A key property of $\Gamma$-spaces says that if $f:K\longrightarrow L$ is an equivalence in $\mathbf{S}_\bullet$,
then so is $F(f):F(K)\longrightarrow F(L)$ for every $F:\Gamma^\op\longrightarrow\mathbf{S}_\bullet$
--- see~\cite[Proposition~4.9]{BFgamma}, \cite[Lemma 2.2.1.3]{DGM}.
The following result is a motivic counterpart of this property.

\begin{thm}
\label{barbashev}
For any very special framed motivic $\Gamma$-space $\cc X$ the functor
$$
\cc X(1_{+},-)
\colon
\cc M \
\longrightarrow
\cc M,
\quad
A
\longmapsto
\cc X(1_{+},A^c)
$$
takes motivic equivalences to local equivalences of motivic spaces.
Hence if $\cc X$ is a special framed motivic $\Gamma$-space,
then the functor
$$
\cc X(\bb S,-)
\colon
\cc M
\longrightarrow
\Sp_{S^{1}}(k),
\quad
A
\longmapsto
\cc X(\bb S,A^c)
$$
takes motivic equivalences to stable local equivalences of
motivic $S^{1}$-spectra.
\end{thm}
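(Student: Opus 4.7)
The plan is to reduce to three classes of generators of motivic equivalences in $\cc M$ and verify preservation on each. Since the motivic model structure on $\cc M$ is the left Bousfield localization of the local (injective) model structure at the $\bb A^{1}$-projections $U\times\bb A^{1}\to U$, it suffices to show that $\cc X(1_{+},-)$ sends (i) simplicial weak equivalences between cofibrant replacements, (ii) $\bb A^{1}$-projections, and (iii) Nisnevich descent equivalences (generated by elementary Nisnevich squares) to local equivalences.

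For~(i), fix $U\in\mathrm{Sm}/k_{+}$; by Lemma~\ref{shesterkin} the functor $n_{+}\mapsto\cc X(1_{+},n_{+}\otimes U)$ is locally a very special $\Gamma$-space, so the classical preservation theorem \cite[Proposition~4.9]{BFgamma} (see also \cite[Lemma~2.2.1.3]{DGM}) shows, sectionwise on $\mathrm{Sm}/k$ and hence locally, that $K\mapsto\cc X(1_{+},K\otimes U)$ takes weak equivalences of pointed simplicial sets to local equivalences of pointed motivic spaces. Because $A^{c}$ is a filtered colimit of $\Delta[n]_{+}\otimes U$ with $U\in\mathrm{Sm}/k_{+}$ and $\cc X(1_{+},-)$ is defined as the corresponding colimit, simplicial equivalences are handled. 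For~(ii) and~(iii), Axioms~(4) and~(5) yield respectively a stable local equivalence $\cc X(\bb S,U\times\bb A^{1})\to\cc X(\bb S,U)$ and a homotopy cartesian square of motivic $S^{1}$-spectra attached to each elementary Nisnevich square. Very specialness together with Lemma~\ref{ove} implies that, after levelwise local fibrant replacement, $\cc X(\bb S,U)$ is an $\Omega$-spectrum with zeroth space $\cc X(1_{+},U)$, so stable local equivalences and homotopy cartesian squares descend to local equivalences and homotopy cartesian squares on zeroth spaces, completing the proof of the first assertion.

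For the second assertion, with $\cc X$ only special, I would repeat the argument directly at the level of motivic $S^{1}$-spectra: the same $\Gamma$-space input and Axioms~(4)--(5) yield stable local equivalences of $\cc X(\bb S,-)$ on the three generators without requiring descent to zeroth spaces. Alternatively, one can pass to the associated very special framed motivic $\Gamma$-space of Remark~\ref{gavrikov} and invoke the first assertion at each simplicial level. The main obstacle is step~(i), because Bousfield--Friedlander preservation is stated sectionwise in $\mathbf{S}_{\bullet}$; one must check that Nisnevich sheafification and local fibrant replacement interact correctly with the filtered colimit defining $\cc X(1_{+},A^{c})$ and with the prolongation along $\Gamma^{\op}$. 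This compatibility rests on the finite presentability of representables in $\cc M$ together with the framed structure guaranteed by Axioms~(1)--(2).
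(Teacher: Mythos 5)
Your high-level program — reduce to a generating set of morphisms for the motivic localization, and verify preservation class by class — is the same plan the paper follows, but your proposal is missing the machinery that makes the reduction actually work, and you correctly sense this in your last sentence. Showing that $\cc X(1_{+},-)$ sends a \emph{generating} set of trivial cofibrations to local equivalences is not by itself enough to conclude it sends all motivic equivalences to local equivalences: one must also control the saturation of the generating set under cobase change, transfinite composition and retracts, and then run a 2-out-of-3 argument against a functorial fibrant replacement. The paper does this in the style of Voevodsky's simplicial radditive functors. The decisive technical ingredient, which your proposal never invokes, is that $\cc X(\bb S,-)$ takes \emph{elementary pushout squares} in $\Delta^{\op}\mathrm{Sm}/k_{+}$ (pushouts along termwise coprojections) to homotopy pushouts of $S^{1}$-spectra (Lemma~\ref{resppush}) and hence preserves stable local equivalences under cobase change along termwise coprojections (Corollary~\ref{resppushcor}). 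To exploit this, the paper carefully replaces the usual generating trivial cofibrations by a set $J_{\mot}=J_{\proj}\cup J_{\nis}\cup J_{\bb A^{1}}$ each of whose members is a termwise coprojection, builds a functorial replacement $A\longmapsto\cc L A$ with $A\longrightarrow\cc L A$ in $J_{\mot}$-cell via the small object argument, and concludes using filtered colimit stability of local equivalences plus preservation of naive simplicial homotopies. None of this appears in your plan, and the compatibility you flag at the end cannot be patched merely by ``finite presentability of representables'' and Axioms~(1)--(2); it is exactly the elementary-pushout-square preservation (which uses Axioms~(1) and~(5) and, for descent to zeroth spaces, very specialness) that does the work.

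Two further quibbles. You say the motivic model structure is a Bousfield localization of the \emph{injective} local model structure, whereas the paper works with the \emph{projective} motivic model structure — this matters because the whole argument depends on the cofibrant replacement $A^{c}$ being a sequential colimit of simplicial objects in $\Delta^{\op}\mathrm{Sm}/k_{+}$, which is specific to the projective set-up. Also, in your treatment of items (ii) and (iii) you propose to descend stable local equivalences and homotopy cartesian squares of spectra to zeroth spaces via Lemma~\ref{ove}; that descent is correct in spirit for an $\Omega$-spectrum, but it gives you information about single Nisnevich squares only, not about the \emph{horns} of such squares that actually generate trivial cofibrations in $J_{\nis}$ and $J_{\bb A^{1}}$. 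Handling those horns, specifically the map $a_{1}$ and the 2-out-of-3 step on $a_{3}$ in the paper's diagram, is precisely where Corollary~\ref{resppushcor} and very specialness are used, and that step is absent from your proposal.
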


Our proof of Theorem~\ref{barbashev} is inspired by Voevodsky's theory of left 
derived radditive functors as in~\cite[Theorem~4.19]{Voe5}
--- the basic notions we will need in this paper are recalled below.
In this context,
we note that the category $\textrm{Sm}/k_{+}$ has finite coproducts.
\vspace{0.1in}

Recall that a morphism $e:A\longrightarrow X$ in a category $\cc C$ is a {\it coprojection\/} if it is isomorphic to the canonical morphism $A\longrightarrow A\sqcup Y$ for some $Y$ \cite[Section~2]{Voe5}.
A morphism $f:A\longrightarrow X$ in $\Delta^{\op}\cc C$ is a {\it termwise coprojection\/} if for all $i\geq 0$ the morphism $f_i:A_i\longrightarrow X_i$ is a coprojection.
As observed in~\cite[Section~2]{Voe5} a morphism $f:B\longrightarrow A$ and an object $X$ conspire into the pushout:
$$
\xymatrix{B\ar[r]^(.4){e_B}\ar[d]& B\sqcup X\ar[d]\\
A\ar[r]^(.4){e_A}& A\sqcup X }
$$
It follows that there exist pushouts for all
pairs of morphisms $(e,f)$ with $e$ a coprojection whenever $\cc C$
is a category with finite coproducts
---
and likewise for pairs of morphisms $(e,f)$ in $\Delta^{\op}\cc C$,
where $e$ is a termwise coprojection.
Following~\cite[Section~2]{Voe5} a square in $\Delta^{\op}\cc C$ is called an {\it elementary pushout square\/} if it is isomorphic to the pushout square for a pair of morphisms $(e,f)$,
where $e$ is a termwise coprojection.
\vspace{0.1in}

If $\cc C$ has finite coproducts, then for any commutative square $Q$ of the form
$$
\xymatrix{
B\ar[r] \ar[d]& Y\ar[d]\\
A\ar[r] & X }
$$
we define the object $K_Q$ by the elementary pushout square:
$$
\xymatrix{
B\sqcup B\ar[r]\ar[d]& B\otimes\Delta[1]\ar[d]\\
A\sqcup Y\ar[r]& K_Q }
$$
There is a canonically induced morphism $p_Q:K_Q\longrightarrow X$.
An important example is the {\it cylinder} $\cyl(f)$ of a morphism $f:X\longrightarrow X'$;
in terms of the construction above,
this is the object associated to the square:
$$
\xymatrix{
X\ar[r]^f\ar[d]& X'\ar[d]\\
X\ar[r]^f&X' }
$$
By~\cite[Lemma 2.9]{Voe5} the natural morphisms $X'\longrightarrow\cyl(f)$ and $\cyl(f)\longrightarrow X'$ are mutually inverse homotopy equivalences.
\vspace{0.1in}

\begin{lem}\label{resppush}
Suppose $\cc X$ is a special framed motivic $\Gamma$-space.
Then $\cc X(\bb S,-)$ takes elementary pushout squares in $\Delta^{\op}\mathrm{Sm}/k_+$ to homotopy pushout squares in the stable local model structure on motivic $S^{1}$-spectra.
\end{lem}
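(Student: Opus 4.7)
The plan is to reduce the lemma to a termwise application of the wedge-splitting stable local equivalence \eqref{equation:kuywgfkv}, which is available because $\cc X$ is special. By definition, an elementary pushout square in $\Delta^{\op}\mathrm{Sm}/k_{+}$ is the pushout of a pair of morphisms $(e,f)$, where $e\colon B\longrightarrow X$ is a termwise coprojection. Consequently, in each simplicial degree $i$ we can choose an isomorphism $X_i\cong B_i\sqcup Y_i$ under which $e_i$ is the canonical inclusion, and the pushout $A\sqcup_B X$ has $i$-simplices $A_i\sqcup Y_i$.

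First, I will treat one fixed simplicial degree. Applying $\cc X(\bb S,-)$ to the elementary pushout square
\begin{equation*}
\xymatrix{B_i\ar[r]\ar[d]& B_i\sqcup Y_i\ar[d] \\ A_i\ar[r]& A_i\sqcup Y_i}
\end{equation*}
produces a commutative square in $\Sp_{S^{1}}(k)$. I will use the natural map
\[
\cc X(\bb S,U)\vee\cc X(\bb S,V)\longrightarrow\cc X(\bb S,U\sqcup V),
\]
which is a stable local equivalence by \eqref{equation:kuywgfkv}, to rewrite the right-hand column as $\cc X(\bb S,-)\vee\cc X(\bb S,Y_i)$ applied to the left-hand column. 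The resulting square is manifestly a homotopy pushout in the stable local model structure, because the horizontal cofibers of both rows equal $\cc X(\bb S,Y_i)$ and the induced comparison map between them is the identity.

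Second, I will assemble the termwise statement using geometric realization. By the conventions surrounding \eqref{equation:mapgmpn}, for a simplicial object $X_\bullet\in\Delta^{\op}\mathrm{Sm}/k_{+}$ one interprets $\cc X(\bb S,X_\bullet)$ as the realization of the simplicial motivic $S^{1}$-spectrum $l\longmapsto\cc X(\bb S,X_l)$. Since geometric realization sends termwise stable local equivalences to stable local equivalences and preserves homotopy pushouts formed termwise, the degree-wise analysis of the previous paragraph yields the claimed homotopy pushout in the stable local model structure on motivic $S^{1}$-spectra.

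The main obstacle is naturality: one must verify that the identifications $\cc X(\bb S,U\sqcup V)\simeq \cc X(\bb S,U)\vee\cc X(\bb S,V)$ can be chosen naturally in both variables, so that they upgrade from a pointwise comparison at each simplicial degree to a morphism of simplicial diagrams whose realization remains a stable local equivalence. This naturality is built into \eqref{equation:kuywgfkv}: its defining map is induced by the two canonical inclusions $U,V\longrightarrow U\sqcup V$ and is therefore functorial in $(U,V)$, while the proof that it is a stable local equivalence uses only Axioms~(1) and~(5), both of which are natural in $U$ and $V$. Once this naturality is in place, the realization of a zigzag of termwise stable local equivalences is again a stable local equivalence, completing the argument.
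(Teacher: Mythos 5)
Your proof is correct and follows essentially the same route as the paper: reduce to the wedge-splitting stable local equivalence \eqref{equation:kuywgfkv} in a single simplicial degree (where the termwise coprojection exhibits that degree as a coproduct), observe that the resulting square of spectra is a homotopy pushout, and then invoke that geometric realization preserves termwise homotopy pushouts. One small remark: the naturality worry you raise at the end is a bit of a red herring, since no natural choice of complement $Y_i$ is required --- being a homotopy pushout is verified degree by degree and then propagated through realization because homotopy pushouts commute with $\hocolim_{\Delta^{\op}}$, so the lack of functoriality of the $Y_i$ in $i$ is harmless.
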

\begin{proof}
Consider the pushout square in $\Delta^{\op}\mathrm{Sm}/k_+$ with horizontal coprojections:
$$
\xymatrix{
B\ar[r]^(.4){e_B}\ar[d]& B\sqcup X\ar[d]\\
A\ar[r]^(.4){e_A}& A\sqcup X
}
$$
The associated square of spectra
$$
\xymatrix{
\cc X(\bb S,B)\ar[r] \ar[d]&\cc X(\bb S, B\sqcup X)\ar[d]\\
\cc X(\bb S,A)\ar[r] & \cc X(\bb S,A\sqcup X)
}
$$
is a homotopy pushout because by \eqref{equation:kuywgfkv} it is stably locally equivalent to the pushout square:
$$
\xymatrix{
\cc X(\bb S,B)\ar[r] \ar[d]&\cc X(\bb S, B)\vee\cc X(\bb S,X)\ar[d]\\
\cc X(\bb S,A)\ar[r] &\cc X(\bb S, A)\vee\cc X(\bb S,X)
}
$$
By definition,
an elementary pushout square is isomorphic to the pushout square of morphisms $(e,f)$,
where $e$ is a termwise coprojection.
It remains to observe that the geometric realization of a simplicial homotopy pushout square of spectra is a homotopy pushout.
\end{proof}

\begin{cor}
\label{resppushcor}
Suppose $\cc X$ is a special framed motivic $\Gamma$-space and
$$
\xymatrix{
C\ar[r]^(.48){e}\ar[d]_f& D\ar[d]\\
C'\ar[r]^(.48){e'}& D'
}
$$
is an elementary pushout square in $\Delta^{\op}\mathrm{Sm}/k_+$ of morphisms $(e,f)$, where $e$ is a termwise coprojection.
If $\cc X(\bb S,e)$ is a stable local equivalence of spectra, then so is $\cc X(\bb S,e')$.
\end{cor}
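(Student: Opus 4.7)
My plan is to reduce the corollary immediately to Lemma \ref{resppush}. The first step is to apply the functor $\cc X(\bb S,-)$ to the given elementary pushout square, producing the commutative diagram of motivic $S^{1}$-spectra
$$
\xymatrix{
\cc X(\bb S,C) \ar[r]^{\cc X(\bb S,e)} \ar[d] & \cc X(\bb S,D) \ar[d] \\
\cc X(\bb S,C') \ar[r]^{\cc X(\bb S,e')} & \cc X(\bb S,D')
}
$$
By Lemma \ref{resppush} this square is a homotopy pushout in the stable local model structure on $\Sp_{S^{1}}(k)$.

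The second step is to invoke the general fact that, in a left proper model category, a homotopy pushout square in which one of the parallel edges is a weak equivalence automatically has its opposite parallel edge a weak equivalence as well; this is the usual gluing/cube lemma. In our setting the stable local model structure on $\Sp_{S^{1}}(k)$ is left proper, see for instance \cite[Chapter~10]{JardineLocal}. Since the horizontal edge $\cc X(\bb S,e)$ is by hypothesis a stable local equivalence, the opposite horizontal edge $\cc X(\bb S,e')$ is a stable local equivalence as well, which is the desired conclusion.

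There is essentially no genuine obstacle in this plan: the corollary is a formal consequence of Lemma \ref{resppush} together with standard model-categorical machinery, and all the substantive work has already been done in Lemma \ref{resppush}. Should one wish to bypass an explicit appeal to left properness, one may instead pass to the associated triangulated homotopy category $\shs(k)$, where the homotopy pushout square above becomes a distinguished triangle on cofibres of $\cc X(\bb S,e)$ and $\cc X(\bb S,e')$, and then conclude via the long exact sequence of a cofibre sequence that the cofibre of $\cc X(\bb S,e')$ is trivial, hence that $\cc X(\bb S,e')$ is a stable local equivalence.
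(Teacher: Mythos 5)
Your proof is correct and is essentially the argument the paper leaves implicit (the corollary is stated without proof, as an immediate formal consequence of Lemma \ref{resppush}). Both your routes — the gluing-lemma argument in the left proper stable local model structure, and the alternative via comparing cofibres in the triangulated homotopy category — are valid and amount to the same standard observation that in a homotopy pushout square a weak equivalence on one edge propagates to the parallel edge.
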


\begin{proof}[Proof of Theorem~\ref{barbashev}]
Let $Q$ denote an elementary Nisnevich square in $\textrm{Sm}/k$:
$$
\xymatrix{
U'\ar[r]\ar[d]&X'\ar[d]\\
U\ar[r]&X
}
$$
By applying the cylinder construction and forming pushouts in $\cc M$ we obtain the commutative diagram:
$$
\xymatrix{
U'_{+}\ar[r]\ar[d]&\cyl(U'_{+}\longrightarrow X'_{+})\ar[d]\ar[r]&X'_{+}\ar[d]\\
U_{+}\ar[r]&\cyl(U'_{+}\longrightarrow X'_{+})\sqcup_{U'_{+}}U_{+}\ar[r]&X_{+}
}
$$
Note that $U'_{+}\longrightarrow\cyl(U'_{+}\rightarrow X'_{+})$ is a termwise coprojection and a projective cofibration between projective cofibrant objects of $\cc M$.
Thus $s(Q):=\cyl(U'_{+}\longrightarrow X'_{+})\sqcup_{U'_{+}}U_{+}$ is projective cofibrant \cite[Corollary 1.1.11]{Hov} and $U_{+}\longrightarrow s(Q)$ is a termwise coprojection.
Likewise,
applying the cylinder construction to $s(Q)\rightarrow X_{+}$ and setting $t(Q):=\cyl\bigl(s(Q)\longrightarrow X_{+}\bigr)$ we get a projective cofibration
$$
\xymatrix{
\cyl(Q)\colon s(Q)\ar[r] & t(Q).
}
$$
Here $\cyl(Q)$ is a termwise coprojection and a local equivalence in $\cc M$.
\vspace{0.1in}

In the following we let $J_{\mot}=J_{\proj}\cup J_{\nis}\cup J_{\bb A^1}$ where
$$
J_{\proj}
=
\{\Lambda^{r}[n]_{+}\wedge U_{+}
\longrightarrow
\Delta[n]_{+}\wedge U_{+}\mid U\in\mathrm{Sm}/k,n>0,0\leq r\leq n\},
$$
$$
J_{\nis}
=
\{\Delta[n]_{+}\wedge s(Q)\bigsqcup_{\partial\Delta[n]_{+}\wedge s(Q)}\partial\Delta[n]_{+}\wedge t(Q)
\longrightarrow
\Delta[n]_{+}\wedge t(Q)\mid \textrm{$Q$ is an elementary Nisnevich square}\},
$$
$$
J_{\bb A^1}
=
\{\Delta[n]_{+}\wedge U\times\bb A^1_{+}\bigsqcup_{\partial\Delta[n]_{+}\wedge U\times\bb A^1_{+}}
\partial\Delta[n]_{+}\wedge \cyl(U\times\bb A^1_{+}\longrightarrow U_{+})\longrightarrow\Delta[n]_{+}\wedge\cyl(U\times\bb A^1_{+}
\longrightarrow
U_{+})\mid U\in\mathrm{Sm}/k\}.
$$
We note that every map in $J_{\mot}$ is a termwise coprojection.
According to~\cite[Lemma 2.15]{DRO2} a morphism is a fibration with fibrant codomain in the projective motivic model structure if and only if it has the right lifting property with respect to $J_{\mot}$.
\vspace{0.1in}

Arguing as in~\cite[Proposition 4.9]{BFgamma} the functor $\cc X(1_{+},-)$ maps members of $J_{\proj}$ to local equivalences.
We note that $\cc X(1_{+},-)$ preserves naive simplicial homotopies
--- if $A$ is a pointed motivic space then $\cc X(1_{+},\Delta[1]_{+}\otimes A^c)$ is a cylinder object for $\cc X(1_{+},A^c)$.
Axiom~(4) implies there is a canonically induced  local equivalence
$$
\cc X(1_{+},U\times\bb A^1)\longrightarrow\cc X(1_{+},\cyl(U\times\bb A^1\longrightarrow U)).
$$
Axiom~(5) implies the same holds for $\cc X(1_{+},\cyl(Q))$.
\vspace{0.1in}

To show that $\cc X(1_{+},-)$ maps members of $J_{\nis}$ to local equivalences,
let us start with a cofibration of simplicial sets $K\hookrightarrow L$ and the induced commutative diagram:
$$
\xymatrix{
K_{+}\wedge s(Q)\ar[rr]\ar[d]_{a_0}&&L_{+}\wedge s(Q)\ar[d]_{a_1}\ar@/^/[ddr]^{a_2}\\
K_{+}\wedge t(Q)\ar[rr]\ar@/_/[rrrd]&&K_{+}\wedge t(Q) \bigsqcup_{K_{+}\wedge s(Q)}L_{+}\wedge s(Q)\ar@{.>}[dr]^{a_3}\\ &&&L_{+}\wedge t(Q)
}
$$
Applying Lemma \ref{shesterkin} to $a_0=K_+\wedge\cyl(Q)$ implies the induced morphism $\cc X(1_{+},a_0)$ is a local equivalence.
The same applies to $a_2=L_+\wedge\cyl(Q)$ and $\cc X(1_{+},a_2)$.
Since $\cc X$ is very special,
Corollary~\ref{resppushcor} shows $\cc X(1_{+},a_1)$ is a local equivalence.
Thus $\cc X(1_{+},a_3)$ is a local equivalence and our claim for $J_{\nis}$ follows.
Likewise,
$\cc X(1_{+},-)$ maps members of $J_{\bb A^1}$ to local equivalences.
\vspace{0.1in}

So far we have established that $\cc X(1_{+},-)$ takes members of $J_{\mot}$ to local equivalences.
For every motivic equivalence $f:A\longrightarrow B$ the induced morphism $f^c:A^c\longrightarrow B^c$ is also a motivic equivalence.
It remains to show the canonical morphism
$$
\cc X(1_{+},f^c):\cc X(1_{+},A^c)
\longrightarrow
\cc X(1_{+},B^c)
$$
is a local equivalence.
To that end we apply the small object argument \cite[Theorem 2.1.14]{Hov}.
\vspace{0.1in}

To begin we note that all the morphisms in $J_{\mot}$ have finitely presentable (co)domains.
For every pointed motivic space $A\in\cc M$,
let $\alpha:A\longrightarrow \cc L A$ be the transfinite composition of the $\aleph_{0}$-sequence
$$
A=E^0\lra{\alpha_0}E^1\lra{\alpha_1}E^2\lra{\alpha_2}\cdots
$$
constructed as follows:
For $n\geq 0$ we let $S_{n}$ denote the set of all commutative squares
$$
\xymatrix{
C\ar[r]\ar[d]_g&E^n\ar[d]\\
D\ar[r]& {*}
}
$$
where $g\in J_{\mot}$ and form the pushout:
$$
\xymatrix{
\bigsqcup_{S_{n}}C\ar[r]\ar[d]_{\sqcup g}&E^n\ar[d]^{\alpha_n}\\
\bigsqcup_{S_{n}}D\ar[r]&E^{n+1}
}
$$
This construction is plainly functorial in $A$.
By definition,
$\alpha$ is a trivial motivic cofibration in $\cc M$ belonging to $J_{\mot}$-cell \cite[Definition 2.1.9]{Hov}.

We claim the horizontal morphisms in the commutative diagram
$$
\xymatrix{
\cc X (1_+,A^c)\ar[r]\ar[d]_{\cc X(1_{+},f^c)}&\cc X(1_{+},{\cc L}(A^c))\ar[d]^{\cc X(1_{+},{\cc L}(f^c))}\\
\cc X(1_{+},B^c)\ar[r]&\cc X(1_{+},{\cc L}(B^c))
}
$$
are local equivalences:
Corollary~\ref{resppushcor} shows $\cc X(1_{+},-)$ maps the cobase change of a member of $J_{\mot}$ to a local equivalence
--- here we use the assumption that $\cc X$ is very special.
Local equivalences are closed under filtered colimits and $\cc X(1_{+},-)$ preserves filtered colimits,
so the same holds for members of $J_{\mot}$-cell.
Since ${\cc L}(A^c)$ and ${\cc L}(B^c)$ are cofibrant and fibrant,
${\cc L}(f^c)$ is a homotopy equivalence.
As noted above $\cc X(1_{+},-)$ preserves naive simplicial homotopies and therefore $\cc X(1_{+},{\cc L}(f^c))$ is a homotopy equivalence.
Thus $\cc X(1_{+},f^c)$ is a local equivalence.
\end{proof}

Let $M\bb Z$ be the motivic ring spectrum representing integral motivic cohomology in the sense of
Suslin-Voevodsky \cite{VoeICM}.
Up to inversion of the exponential characteristic $e$ of the base field $k$,
the highly structured category of $M\bb Z$-modules is equivalent to Voevodsky's derived category of motives
---
see \cite[Theorem~58]{RO} and also \cite[Theorem~5.8]{HKO}.
A crucial part of the proof shows that for every $U\in\mathrm{Sm}/k$ the natural assembly morphism
$$
M\bb Z\wedge U_+\longrightarrow M\bb Z\circ(-\wedge U_+)
$$
is an isomorphism in $\SH(k)[1/e]$.
For a $\Gamma$-space $F:\Gamma^{\op}\longrightarrow\bf S_\bullet$ the corresponding statement says that the morphism
$$
\ev_{S^1}(F)\wedge K
\longrightarrow
\ev_{S^1}(F(-\wedge K))
$$
is a stable equivalence for every pointed simplicial set $K\in\bf S_\bullet$
--- see~\cite[Lemma~4.1]{BFgamma}.
We show a similar property for special framed motivic $\Gamma$-spaces.
\begin{thm}
\label{stlouis}
Suppose $k$ is an infinite perfect field of exponential characteristic $e$.
Let $U\in\mathrm{Sm}/k$ be such that $U_+$ is strongly dualizable in $\SH(k)$,
e.g.,
$U$ is a smooth projective algebraic variety.
For every special framed motivic $\Gamma$-space $\cc X$ the natural morphism of bispectra
\begin{equation}
\label{equation:kfjegrv}
\ev_{S^1,\gmp}(\cc X)\wedge U_+
=
\ev_{\gmp}(\cc X(\bb S,-))\wedge U_+
\longrightarrow
\ev_{\gmp}(\cc X(\bb S,-\otimes U))
=
\ev_{S^1,\gmp}(\cc X(-\otimes U))
\end{equation}
is a stable motivic equivalence.
Moreover,
for every pointed motivic space $A\in\cc M$ the natural morphism of bispectra
\begin{equation}
\label{equation:aewsas}
\ev_{S^{1},\bb G}(\cc X)\wedge A^c
\longrightarrow
\ev_{S^{1},\bb G}(\cc X(-\otimes A^c))
\end{equation}
is an isomorphism in $\SH(k)[1/e]$.
\end{thm}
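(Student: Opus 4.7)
The plan is to leverage the equivalence of categories in Theorem \ref{theorem:firstmain} and reduce both assertions to computations on generators.

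First I would observe that both sides of \eqref{equation:kfjegrv}, regarded as functors $\Ho_{\Gamma\M}^{\fr}(k)\longrightarrow\SH(k)$ of the input $\cc X$, preserve homotopy colimits. For the left-hand side this is because $(-)\wedge U_+$ is a left adjoint in the closed symmetric monoidal category $\SH(k)$; strong dualizability of $U_+$ ensures additionally that $(-)\wedge U_+\simeq F(DU_+,-)$, so the functor is also exact in a two-sided sense and behaves well on homotopy colimits of unbounded diagrams. For the right-hand side, precomposition with $-\times U\colon\mathrm{Sm}/k_+\longrightarrow\mathrm{Sm}/k_+$ is levelwise colimit-preserving on framed motivic $\Gamma$-spaces, and the evaluation \eqref{equation:S1Gevaluation} transports such colimits to homotopy colimits of bispectra. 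Under the equivalence $\ev_{S^1,\gmp}:\Ho_{\Gamma\M}^{\fr}(k)\xrightarrow{\simeq}\SH(k)_{\geq 0}$, the target category is generated under homotopy colimits and extensions by the $\Sigma^\infty V_+$ for $V\in\mathrm{Sm}/k$, so it suffices to check \eqref{equation:kfjegrv} is a stable motivic equivalence for the generating framed motivic $\Gamma$-spaces $\cc X=\uhom(V,C_\ast\mathrm{Fr})$ of Example \ref{example:liuhvkbh}. For those, $\cc X(-\otimes U)=\uhom(V\times U,C_\ast\mathrm{Fr})$ and the map is identified with the canonical stable equivalence $\Sigma^\infty V_+\wedge U_+\simeq\Sigma^\infty(V\times U)_+$.

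For \eqref{equation:aewsas}, I would use that a projective cofibrant replacement $A^c$ is a cellular object assembled from representables $(\Delta[n]\times V)_+$, $V\in\mathrm{Sm}/k$. Since both $A\mapsto\ev_{S^1,\bb G}(\cc X)\wedge A^c$ and $A\mapsto\ev_{S^1,\bb G}(\cc X(-\otimes A^c))$ preserve homotopy colimits of pointed motivic spaces, it suffices to verify \eqref{equation:aewsas} for $A=V_+$ with $V\in\mathrm{Sm}/k$. When $V$ is smooth projective the first assertion applies verbatim, giving an unlocalized stable motivic equivalence. For a general smooth $V$, I would invoke Gabber's refinement of de Jong's alteration theorem (valid over the infinite perfect base field $k$): after inverting the exponential characteristic $e$, every $V_+$ lies in the localizing subcategory of $\SH(k)[1/e]$ generated by smooth projective varieties. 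Applying the first assertion to each smooth projective constituent and using preservation of homotopy colimits then yields the claimed isomorphism in $\SH(k)[1/e]$.

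The main obstacle is rigorously establishing that $\cc X\mapsto\ev_{S^1,\gmp}(\cc X(-\otimes U))$ preserves the relevant homotopy colimits at the level of bispectra, i.e., that homotopy colimits in $\Ho_{\Gamma\M}^{\fr}(k)$ inherited via Theorem \ref{theorem:firstmain} are compatible with the sectionwise colimit on the right-hand variable. Once this, together with the translation of the Gabber resolution into an $\SH(k)[1/e]$-homotopy colimit presentation of smooth $V_+$ by smooth projective $W_+$, is in hand, the computation on the generating representables of Example \ref{example:liuhvkbh} finishes both parts.
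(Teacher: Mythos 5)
Your approach is genuinely different from the paper's, and the main obstacle you flag at the end is a real gap, not just a technicality.

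The paper's proof does not reduce to generators of $\SH(k)_{\geq 0}$. Instead it uses Theorem~\ref{barbashev} together with an enriched left Kan extension $\Upsilon\colon[\mathrm{Sm}/k_+,\cc M]\longrightarrow[f\cc M,\cc M]$ of $\cc X(1_+,-)$ to a functor on finitely presentable motivic spaces, and then invokes \cite[Corollary~56]{RO} which directly verifies that the assembly morphism for such a motivic functor is a stable equivalence when $U_+$ is strongly dualizable. The $S^1$-delooping argument (passing to trispectra via \cite[Lemma~4.1]{BFgamma} and using that $-\wedge S^1$ is a Quillen auto-equivalence) is needed because \cite[Corollary~56]{RO} applies only after the target is stabilized. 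This avoids having to know anything about homotopy colimits in $\Ho^{\fr}_{\Gamma\M}(k)$.

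Your route has three concrete problems. First, the category $\Ho^{\fr}_{\Gamma\M}(k)$ is \emph{defined} by decreeing its morphisms to be those of $\SH^{\fr}(k)$ under evaluation (Definition~\ref{fmgshc}); it is not the homotopy category of a model structure on framed motivic $\Gamma$-spaces. Homotopy colimits in it only make sense via transport across $\ev_{S^1,\gmp}$, and it is precisely \emph{because} the quasi-inverse $\Gamma\bb M_{\fr}$ involves $\gmp$-internal homs and fibrant replacements (see Remark~\ref{sergachev}) that it is unclear $\cc X\mapsto\cc X(-\otimes U)$ transports homotopy colimits; this is not a detail to be filled later but the heart of the matter. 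Second, $\SH(k)_{\geq 0}$ is \emph{not} generated under homotopy colimits and extensions by the $\Sigma^\infty V_+$ alone — that gives $\SH^{\veff}(k)$ — you also need the negative $\gmp$-twists $\Sigma^{-j,-j}\Sigma^\infty V_+$. This is repairable (strong dualizability of $U_+$ lets you commute with $\Omega_{\gmp}$), but it needs to be said. Third, for part \eqref{equation:aewsas} the Gabber–de~Jong step produces a presentation of $\Sigma^\infty V_+$ by smooth projective cells in the \emph{stable} category $\SH(k)[1/e]$, while your functors $A\mapsto\ev_{S^1,\bb G}(\cc X)\wedge A^c$ and $A\mapsto\ev_{S^1,\bb G}(\cc X(-\otimes A^c))$ live on pointed motivic spaces and only preserve homotopy colimits formed there; there is no reason the stable cell presentation lifts to a motivic-space-level colimit, so the reduction does not close. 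The paper instead uses Riou's sharper fact (recorded in \cite[Appendix~B]{LYZ}) that $U_+$ is already strongly dualizable in $\SH(k)[1/e]$ for \emph{every} smooth $U$, so the first part applies verbatim in the $e^{-1}$-stable model structure.
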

\begin{proof}
Without loss of generality we may assume that $\cc X$ is very special
--- see Remark~\ref{gavrikov}.
We view $\cc X(1_+,-)$ as an $\cc M$-enriched functor from $\mathrm{Sm}/k_+$ to $\cc M$.
\vspace{0.1in}

Recall from \S\ref{section:preliminaries} the $\cc M$-category of finitely presentable motivic spaces $f\cc M$.
Via an enriched left Kan extension functor the inclusion of $\cc M$-categories $\iota:\mathrm{Sm}/k_+\hookrightarrow f\cc M$ yields the functor
$$
\Upsilon
\colon
[\mathrm{Sm}/k_+,\cc M]\longrightarrow[f\cc M,\cc M].
$$
By expressing $\cc Y\in[\mathrm{Sm}/k_+,\cc M]$ as a coend
$$
\cc Y
=
\int^{U\in\mathrm{Sm}/k_+}\cc Y(U)\wedge_{\cc M}[U,-],
$$
we obtain
$$
\Upsilon(\cc Y)
=
\int^{U\in\mathrm{Sm}/k_+}\cc Y(U)\wedge_{\cc M}[\iota(U),-].
$$
By construction,
$\Upsilon(\cc Y)(V)=\cc Y(V)$ for all $V\in\Delta^{\op}\mathrm{Sm}/k_+$.
More generally,
$\Upsilon(\cc Y)(A^c)=\cc Y(A^c)$ for every pointed motivic space $A\in\cc M$.
\vspace{0.1in}

Theorem~\ref{barbashev} implies that $\Upsilon(\cc X(1_+,-))$ maps motivic weak
equivalences of projective cofibrant motivic spaces to local equivalences.
Owing to~\cite[Corollary~56]{RO} the $\bb G$-evaluation of the assembly morphism
$$
\Upsilon(\cc X(1_+,-\otimes\bb S))\wedge U_+\longrightarrow \Upsilon(\cc X(1_+,-\otimes\bb S\otimes U))
$$
is a stable motivic equivalence between motivic $(S^{1},\bb G)$-bispectra if $U_+$ is strongly dualizable in $\SH(k)$.
Here $\cc X(1_+,-\otimes\bb S\otimes U)$ is the evaluation at the sphere $\mathbb S$ of the $\Gamma$-space of Lemma~\ref{shesterkin}.
Since $\Upsilon(\cc X(1_+,V))=\cc X(1_+,V)$ for all $V\in\Delta^{\op}\mathrm{Sm}/k_+$ the same
holds for the $\bb G$-evaluation of the morphism
$$
\cc X(1_+,-\otimes\bb S)\wedge U_+
\longrightarrow
\cc X(1_+,-\otimes\bb S\otimes U).
$$
We denote by $\cc X(S^n,-)$, $n>0$, the very special framed motivic $\Gamma$-space with sections
$$
(k_+,U)
\longmapsto
\cc X(S^n\wedge k_+,U).
$$
Replacing $\cc X$ with $\cc X(S^n,-)$,
we deduce the stable motivic equivalence of motivic $(S^1,\bb G)$-bispectra
\begin{equation}
\label{equation:jyervjcg}
\ev_{\gmp}(\cc X(S^n,-\otimes\bb S))\wedge U_+
\longrightarrow \ev_{\gmp}(\cc X(S^n,-\otimes\bb S\otimes U)).
\end{equation}

Combining \eqref{equation:jyervjcg} with \cite[Lemma 4.1]{BFgamma} we
obtain the stable motivic equivalences of motivic $(S^1,S^1,\bb G)$-trispectra
$$
\ev_{\gmp}(\cc X(\bb S,-\otimes\bb S))\wedge U_+\longrightarrow \ev_{\gmp}(\cc X(\bb S,-\otimes\bb S\otimes U))
$$
and
$$
\ev_{\gmp}(\cc X(\bb S,-))\wedge U_+\wedge\bb S\longrightarrow \ev_{\gmp}(\cc X(\bb S,-\otimes U))\wedge\bb S.
$$
For the cofibrant replacements of $\ev_{\gmp}(\cc X(\bb S,-))\wedge U_+$ and $\ev_{\gmp}(\cc X(\bb S,-\otimes U))$ in $\Sp_{S^1,\bb G}(k)$
we find a stable motivic equivalence between cofibrant motivic $(S^1,S^1,\bb G)$-trispectra
$$
(\ev_{\gmp}(\cc X(\bb S,-))\wedge U_+)^c\wedge\bb S
\longrightarrow
\ev_{\gmp}(\cc X(\bb S,-\otimes U))^c\wedge\bb S.
$$
Since $-\wedge S^1$ is a Quillen auto-equivalence on $\Sp_{S^1,\bb G}(k)$ we deduce the stable motivic equivalence
$$
(\ev_{\gmp}(\cc X(\bb S,-))\wedge U_+)^c
\longrightarrow
\ev_{\gmp}(\cc X(\bb S,-\otimes U))^c
$$
between cofibrant motivic $(S^1,\bb G)$-bispectra
--- see also ~\cite[Theorem 5.1]{H}.
Therefore \eqref{equation:kfjegrv} is a stable motivic equivalence.
\vspace{0.1in}

Recall that $U_+$ is strongly dualisable in $\SH(k)[1/e]$ for every $U\in\mathrm{Sm}/k$
---
see~\cite[Appendix~B]{LYZ}.
The previous arguments show that \eqref{equation:kfjegrv} is an $e^{-1}$-stable motivic equivalence
--- note that~\cite[Corollary~56]{RO} concerns the stable motivic model structure on motivic functors,
but it readily extends to the $e^{-1}$-stable model structure.
\vspace{0.1in}

Finally,
when $A\in\cc M$,
recall that $A^c$ is a sequential colimit of simplicial schemes from $\Delta^{\op}\mathrm{Sm}/k_+$.
Since the geometric realization functor preserves $e^{-1}$-stable motivic equivalences
we conclude \eqref{equation:aewsas} is an isomorphism in $\SH(k)[1/e]$.
\end{proof}

\end{document}